\newcommand{\1}{\mathbf {1}}
\newcommand{\Z}{{\mathbb Z}}
\newcommand{\C}{{\mathbb C}}
\newcommand{\wh}{{\widehat{\mathfrak h}}}
\newcommand{\mraff}{\mathrm{aff}}
\newtheorem{thm}{Theorem}[section]
\newtheorem{prop}[thm]{Proposition}
\newtheorem{lem}[thm]{Lemma}
\newtheorem{rmk}[thm]{Remark}
\newtheorem{definition}[thm]{Definition}
\begin{document}

\begin{center}
{\Large \bf  Representations of $\mathbb{Z}_{2}$-orbifold of the parafermion vertex operator algebra $K(sl_2,k)$}

\end{center}

\begin{center}
{ Cuipo Jiang$^{a}$\footnote{Supported by China NSF grants No.11771281 and No.11531004.}
and Qing Wang$^{b}$\footnote{Supported by
China NSF grants No.11622107 and No.11531004, Natural Science Foundation of Fujian Province
	No.2016J06002 and Fundamental Research Funds for the Central University No.20720160008.}\\
$\mbox{}^{a}$ School of Mathematical Sciences, Shanghai Jiao Tong University, Shanghai 200240, China\\
\vspace{.1cm}
$\mbox{}^{b}$ School of Mathematical Sciences, Xiamen University,
Xiamen 361005, China\\
}
\end{center}

\begin{abstract}
In this paper, the irreducible modules for the $\Z_{2}$-orbifold vertex operator subalgebra of the parafermion vertex operator algebra associated to the
irreducible highest weight modules for the affine Kac-Moody algebra
$A_1^{(1)}$ of level $k$ are classified and constructed.

\end{abstract}

\section{Introduction}
\def\theequation{1.\arabic{equation}}
\setcounter{equation}{0}

Coset construction and orbifold construction are two major ways to construct new vertex operator algebras from given ones. The parafermion vertex operator algebra $K(\mathfrak{g},k)$ is a special kind of coset construction. It is the commutant of a Heisenberg vertex operator subalgebra in the simple affine vertex operator algebra $L_{\hat{\mathfrak{g}}}(k,0)$, where $L_{\hat{\mathfrak{g}}}(k,0)$ is the integrable highest weight module with the positive integer level $k$ for affine Kac-Moody algebra $\hat{\mathfrak{g}}$ associated to a finite dimensional simple Lie algebra $\mathfrak{g}$. We denote $K(sl_{2},k)$ by $K_{0}$ and $L_{\hat{sl_{2}}}(k,0)$ by $L(k,0)$ in this paper. A systematic study of parafermion vertex operator algebras started from  \cite{DLY2} due to the connection of parafermion vertex operator algebras with $W$-algebras, Griess algebras and the moonshine vertex operator algebra. The structure and representation theory of the parafermion vertex operator algebras has been fully studied these years (see \cite{ALY1,ALY2,DLWY,DLY2,DW1,DW2,DW3,DR,JL1,JL2,Lam,Wang} etc.) Among the main results of the structure and representations for the parafermion vertex operator algebras, we see that the parafermion vertex operator algebra $K_0$ is no doubt the most important in the study of the general parafermion vertex operator algebra $K(\mathfrak{g},k)$. The reason comes from the structure of the parafermion vertex operator algebra $K(\mathfrak{g},k)$  studied in \cite{DW1}, that is,  $K_0$ is the building block of $K(\mathfrak{g},k)$, which shows that the role of  $K_0$ in the $K(\mathfrak{g},k)$ is similar to the role of $sl_2$ played in Kac-Moody algebras.

 On the other hand, parafermion vertex operator algebra  $K_0$ can be identified to certain $W$-algebra.
It was proved in \cite{DLY2} that the Virasoro vector $\omega$ and Virasoro primary vectors $W^{3}, W^{4}$ and $W^{5}$ of weight $3,4$ and $5$ are the strong generators of $K_{0}$. Thus the simple vertex operator algebra $K_{0}$ can be regarded as the quotient of $W$-algebra $W(2,3,4,5)$. Recently, a conjecture states that the parafermion vertex operator algebra $K_{0}$ is isomorphic to the $(k+1,k+2)$-minimal series $W$-algebra associated with $sl_k$ has been proved in \cite{ALY2}. Furthermore, both $C_2$-cofiniteness and rationality have also been confirmed in \cite{ALY1} and \cite{ALY2}. Specifically, the classification of irreducible modules for parafermion vertex operator algebra $K_{0}$ has been done in \cite{ALY1}.

 From \cite{DLY2}, we know that the full automorphism group of the parafermion vertex operator algebra $K_{0}$ for $k\geq 3$ is a group of order 2 generated by $\sigma$ which is defined  by $\sigma(h)=-h$, $\sigma(e)=f$, $\sigma(f)=e$, where $\{ h, e, f\}$ is a standard Chevalley basis of $sl_2$ with brackets $[h,e] = 2e$, $[h,f] = -2f$, $[e,f] = h$. Let $V$ be a vertex operator algebra and $G$  a finite automorphism group of $V$, the fixed-point subalgebra $V^{G}=\{v\in V| \ g.v=v,\ g\in G\}$ is called an orbifold vertex operator subalgebra. In this paper, we study the $\mathbb{Z}_{2}$-orbifold vertex operator subalgebra $K_{0}^{\sigma}$ of the parafermion vertex operator algebra $K_{0}$. More precisely, we classify and construct all the irreducible modules of the orbifold vertex operator subalgebra $K_{0}^{\sigma}$. From \cite{DM1}, we know that if $V$ is a vertex operator algebra with an automorphism $g$ of order $T$, and $M=\sum_{n\in\frac{1}{T}{\mathbb{Z}}_{+}}M(n)$ is an irreducible $g$-twisted admissible module of $V$, then
$M^{i}=\bigoplus_{n\in\frac{i}{T}+{\mathbb{Z}}_{+}}M(n)$ is an irreducible $V^{g}$-module for $i=0,\cdots, T-1$. From \cite{M2}, \cite{CM} (also see \cite{CKLR}), we know that the orbifold vertex operator subalgebra $K_{0}^{\sigma}$ is regular. The main difficulty to determine the irreducible modules of the orbifold vertex operator subalgebra $K_{0}^{\sigma}$ is to find the lowest weight vectors in each $M^{i}$ for our case $i=0,1, T=2$ and to find the lowest weight vectors of irreducible modules of the orbifold vertex operator subalgebra $K_{0}^{\sigma}$ from the irreducible modules of $K_0$.
Based on the classification results of the irreducible modules of $K_0$, we can deduce that if $k$ is odd, there are at most $\frac{k-1}{2}+1$ inequivalent irreducible $\sigma$-twisted modules of $K_0$, and if $k$ is even, there are at most $\frac{k}{2}+2$
inequivalent $\sigma$-twisted irreducible modules of $K_0$.
The problem becomes interesting when we construct explicitly the irreducible $\sigma$-twisted modules of $K_0$. In the case for $k$ being odd, we can follow the standard way to construct $\frac{k-1}{2}+1$ irreducible $\sigma$-twisted modules for $K_0$ from the $\sigma$-twisted modules of the affine vertex operator algebra $L(k,0)$ by analyzing the lowest weight in the grade zero of the admissible $\sigma$-twisted modules of the affine vertex operator algebra $L(k,0)$. However, the situation becomes different for the case of $k$ being even, since there are only $\frac{k}{2}+1$ different lowest weights in the grade zero of the admissible $\sigma$-twisted modules of the affine vertex operator algebra $L(k,0)$. So we need to find another lowest weight vector of $K_{0}$ in the $\sigma$-twisted modules of the affine vertex operator algebra $L(k,0)$. We achieve this by the highly nontrivial calculation in Section 3. It turns out that another lowest weight vector of $K_{0}$ is in the grade $\frac{1}{2}$ of the $\sigma$-twisted module constructed from the irreducible module $L(k,k/2)$ of the affine vertex operator algebra $L(k,0)$.  We achieve this with the help of the orbifold model $(V_{\mathbb{Z}\beta}^{+})^{\sigma}$ of the lattice vertex operator algebra $V_{\mathbb{Z}\beta}^{+}$ with $\langle \beta,\beta \rangle=6$, where $\sigma$ is an automorphism of $V_{\mathbb{Z}\beta}^{+}$ induced from $\sigma(\beta)=-\beta$. The reason is that from \cite{DLY2}, we know that if $k=4$, $K_0$ is isomorphic to the lattice vertex operator algebra $V_{\mathbb{Z}\beta}^{+}$ with $\langle \beta,\beta \rangle=6$, and we can write down all the lowest weights of the irreducible modules of the orbifold vertex operator subalgebra $(V_{\mathbb{Z}\beta}^{+})^{\sigma}$, which helps us to find the lowest weight vector in each $M^{i}$ for $i=0,1, T=2$ and the lowest weight vectors of irreducible modules of the orbifold vertex operator subalgebra $K_{0}^{\sigma}$ from the irreducible modules of $K_0$. As a result, the complete list of the lowest weight vectors with their weights for all the irreducible modules of $K_{0}^{\sigma}$ has been presented in Section 3. Finally we would like to point out that the clear list of the lowest weight vectors with their weights for all the irreducible modules of $K_{0}^{\sigma}$ is necessary for us to determine the quantum dimensions and fusion rules of the orbifold vertex operator subalgebra $K_{0}^{\sigma}$, which is our next work.

The paper is organized as follows. In Section 2, we recall some results about the parafermion vertex operator algebra $K_0$ and its orbifold vertex operator subalgebra $K_{0}^{\sigma}$. In Section 3, we first study the $\sigma$-twisted modules of parafermion vertex operator algebra $K_0$, and give the construction of all the irreducible $\sigma$-twisted modules of parafermion vertex operator algebra $K_0$, then we classify and construct the irreducible modules of the orbifold vertex operator subalgebra $K_{0}^{\sigma}$.

\section{Preliminary}
\label{Sect:V(k,0)}

In this section, we recall from \cite{DLY2}, \cite{DLWY} and \cite{ALY1} some basic results on the parafermion vertex
operator algebra associated to the
irreducible highest weight module for the affine Kac-Moody algebra
$A_1^{(1)}$ of level $k$ with $k$ being a positive integer. First we recall the notion of the parafermion vertex operator algebra.

We are working in the setting of \cite{DLY2}. Let $\{ h, e, f\}$
be a standard Chevalley basis of $sl_2$ with Lie brackets $[h,e] = 2e$, $[h,f] = -2f$,
$[e,f] = h$. Let $\widehat{sl}_2 = sl_2 \otimes \C[t,t^{-1}]
\oplus \C C$ be the affine Lie algebra associated to $sl_2$. Let $k \ge 1$
be an integer and
\begin{equation*}
V(k,0) = V_{\widehat{sl}_2}(k,0) = \mbox{Ind}_{sl_2 \otimes \C[t]\oplus \C
C}^{\widehat{sl}_2}\C
\end{equation*}
be the induced $\widehat{sl}_2$-module such that $sl_2 \otimes \C[t]$ acts
as $0$ and $C$ acts as $k$ on $\mathbf{1}=1$. Then $V(k,0)$ is a
vertex operator algebra generated by $a(-1)\1$ for $a\in sl_2$ such
that
$$Y(a(-1)\1,z) = a(z)=\sum_{n \in \Z} a(n)z^{-n-1}$$
where $a(n)=a\otimes t^n$, with the
vacuum vector $\1$ and the Virasoro vector
\begin{align*}
\omega_{\mraff} &= \frac{1}{2(k+2)} \Big( \frac{1}{2}h(-1)^2\1 +
e(-1)f(-1)\1 + f(-1)e(-1)\1 \Big)\\
&= \frac{1}{2(k+2)} \Big( -h(-2)\1 + \frac{1}{2}h(-1)^2\1 + 2e(-1)f(-1)\1
\Big)
\end{align*}
of central charge $\frac{3k}{k+2}$ (e.g. \cite{FZ},
\cite{Kac}, \cite[Section 6.2]{LL}).

Let $M(k)$ be the vertex operator subalgebra of $V(k,0)$
generated by $h(-1)\1$ with the Virasoro
element
$$\omega_{\gamma} = \frac{1}{4k}
h(-1)^{2}\1$$
of central charge $1$.

%As usual we denote the component operators of $Y(u,z)$ for $u\in
%V$ by $u_n$ for any vertex operator algebra $V.$ That is,
%$Y(u,z)=\sum_{n\in \Z}u_nz^{-n-1}.$ In the case $V=V(k,0)$ and
%$u=a(-1)\1$ for $a\in sl_2$, we see that $(a(-1)\1)_n=a(n).$ So in
%the rest of paper, we will use both $a(n)$ and $(a(-1)\1)_n$ for
%$a\in sl_2$ and use $u_n$ only for general $u$ without further
%explanation.

The vertex operator algebra $V(k,0)$ has a unique maximal ideal $\mathcal{J}$, which is generated by a weight $k+1$ vector $e(-1)^{k+1}\1$ \cite{Kac}. The quotient algebra $L(k,0)=V(k,0)/\mathcal{J}$ is a
simple, rational  vertex operator algebra as $k$ is a positive
integer (cf. \cite{FZ}, \cite{LL}). Moreover, the image of $M(k)$
in $L(k,0)$ is isomorphic to $M(k)$ and will be
denoted by $M(k)$ again. Set
\begin{equation*}
 K(sl_2,k)=\{v \in {\mathcal{L}}(k,0)\,|\, h(m)v =0
\text{ for }\; h\in {\mathfrak h},
 m \ge 0\}.
\end{equation*}
Then $K(sl_2,k)$ which is the space of highest weight vectors
with highest weight $0$ for $\wh$
is the commutant of $M(k)$ in $L(k,0)$
and is called the parafermion vertex operator algebra associated
to the irreducible highest weight module $L(k,0)$ for
$\widehat{sl_2}.$ The Virasoro element of $K(sl_2,k)$ is given by
$$\omega =\omega_{\mraff} - \omega_{\gamma}=\frac{1}{2k(k+2)} \Big(-kh(-2)\1-h(-1)^2\1+2k
e(-1)f(-1)\1 \Big)
$$
with central charge $\frac{2(k-1)}{k+2}$,
where we still use $\omega_{\mraff}, \omega_{\gamma}$ to
denote their images in $L(k,0)$. We denote $K(sl_2,k)$ by $K_0$.

Set \begin{equation*}\label{eq:W3}
\begin{split}
W^3 &= k^2 h(-3)\1 + 3 k h(-2)h(-1)\1 +
2h(-1)^3\1 - 6k h(-1)e(-1)f(-1)\1 \\
& \quad + 3 k^2e(-2)f(-1)\1 - 3 k^2e(-1)f(-2)\1
\end{split}
\end{equation*}
in $V(k,0)$, and also denote its image in $L(k,0)$ by $W^{3}$. It was proved in \cite{DLY2}(cf.\cite{DLWY}, \cite{DW1}) that the parafermion vertex operator algebra $K_0$ is simple and is generated by $\omega$ and $W^{3}$. If $k\geq3$, the parafermion vertex operator algebra $K_0$ in fact is generated by $W^{3}$. The irreducible $K_0$-modules $M^{i,j}$ for $0\leq i\leq k, 0\leq j\leq k-1$ were constructed in \cite{DLY2}. Note that $K_0=M^{0,0}$. It was also proved that $M^{i,j}\cong M^{k-i,k-i+j}$ as $K_0$-module in
\cite[Theorem 4.4]{DLY2}.   Theorem 8.2 in \cite{ALY1} showed that the $\frac{k(k+1)}{2}$ irreducible $K_0$-modules $M^{i,j}$ for $1\leq i\leq k, 0\leq j\leq i-1$ constructed in \cite{DLY2} form a complete set of isomorphism classes of irreducible $K_0$-modules. Moreover, $K_0$ is $C_2$-cofinite \cite{ALY1} and rational \cite{ALY2}.

 Let $L(k,i)$ for $0\leq i\leq k$ be the irreducible modules for the rational vertex operator algebra $L(k,0)$ with the top level $U^{i}=\bigoplus_{j=0}^{i}\mathbb{C}v^{i,j}$ which is an $(i+1)$-dimensional irreducible module of the simple Lie algebra $\C h(0)\oplus\C e(0)\oplus \C f(0)\cong sl_2$ \cite{DLY2}. The following result was due to \cite{DLY2}.

\begin{lem}\label{lem:lowest}
The operator $o(\omega)=\omega_{1}$ acts on $v^{i,j}, \ 0\leq i\leq k,\ 0\leq j\leq i$ as follows:

\begin{eqnarray}\label{lowest}
o(\omega)v^{i,j}=\frac{1}{2k(k+2)}\Big(k(i-2j)-(i-2j)^{2}+2kj(i-j+1)\Big)v^{i,j}.
 \end{eqnarray}
\end{lem}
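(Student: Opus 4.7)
The statement is a direct calculation: extract the coefficient of $z^{-2}$ in $Y(\omega,z)$ and apply it to $v^{i,j}$. The plan is to compute $o(\omega)=\omega_1=L(0)$ by reducing each of the three summands of $\omega$ to an operator on the top level, where only non-positive modes of $h,e,f$ can contribute.

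First, using $Y(h(-2)\1,z)=\partial_z h(z)=\sum_{n}(-n-1)h(n)z^{-n-2}$, one reads off $o(h(-2)\1)=-h(0)$. Next, from $Y(h(-1)^2\1,z)={:}h(z)h(z){:}$ the coefficient of $z^{-2}$ is $\sum_{r<0}h(r)h(-r)+\sum_{r\ge 0}h(-r)h(r)$; since $v^{i,j}$ lies in the top level, $h(m)v^{i,j}=0$ for $m>0$, and only the $r=0$ term survives, giving $o(h(-1)^2\1)\,v^{i,j}=h(0)^2v^{i,j}=(i-2j)^2v^{i,j}$. Finally, from $Y(e(-1)f(-1)\1,z)={:}e(z)f(z){:}$ the coefficient of $z^{-2}$ is $\sum_{r<0}e(r)f(-r)+\sum_{r\ge 0}f(-r)e(r)$, and again only $r=0$ contributes, giving $o(e(-1)f(-1)\1)\,v^{i,j}=f(0)e(0)v^{i,j}$.

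To evaluate $f(0)e(0)v^{i,j}$, I use the fact that $U^i=\bigoplus_{j=0}^{i}\C v^{i,j}$ is the $(i+1)$-dimensional irreducible $sl_2$-module in its standard realization: $h(0)v^{i,j}=(i-2j)v^{i,j}$, $f(0)v^{i,j}=v^{i,j+1}$, and $e(0)v^{i,j}=j(i-j+1)v^{i,j-1}$. Hence $f(0)e(0)v^{i,j}=j(i-j+1)v^{i,j}$. Combining the three contributions with the prefactor of $\omega$,
\begin{align*}
o(\omega)v^{i,j}
&=\frac{1}{2k(k+2)}\Bigl(-k\cdot(-(i-2j))-(i-2j)^{2}+2k\cdot j(i-j+1)\Bigr)v^{i,j}\\
&=\frac{1}{2k(k+2)}\Bigl(k(i-2j)-(i-2j)^{2}+2kj(i-j+1)\Bigr)v^{i,j},
\end{align*}
which is the claimed formula.

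The only step requiring genuine care is the normal-ordering bookkeeping in $o(e(-1)f(-1)\1)$: after truncation against the top level one is left with $f(0)e(0)$ rather than $e(0)f(0)$, so the eigenvalue is $j(i-j+1)$ and not $(j+1)(i-j)$. Everything else is routine use of the definition of the top level and the standard $sl_2$ action on $U^{i}$.
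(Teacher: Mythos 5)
Your computation is correct, and it is the standard direct verification; the paper itself gives no proof of this lemma but simply cites [DLY2], so there is nothing to diverge from. All three mode computations are right, and in particular you correctly kept track of the normal ordering so that the surviving zero-mode contribution from $e(-1)f(-1)\1$ is $f(0)e(0)$, with eigenvalue $j(i-j+1)$. The only cosmetic discrepancy is that you use a different normalization of the basis of $U^{i}$ than the paper does ($f(0)v^{i,j}=v^{i,j+1}$, $e(0)v^{i,j}=j(i-j+1)v^{i,j-1}$ versus the paper's $f(0)v^{i,j}=(j+1)v^{i,j+1}$, $e(0)v^{i,j}=(i-j+1)v^{i,j-1}$); since $f(0)e(0)v^{i,j}=j(i-j+1)v^{i,j}$ in either convention, this does not affect the result.
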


Let $\sigma$ be an automorphism of Lie algebra $sl_2$ defined by $\sigma(h)=-h, \ \sigma(e)=f,\ \sigma(f)=e$. $\sigma$ can be lifted to an automorphism $\sigma$ of the vertex operator algebra $V(k,0)$ of order 2 in the following way:
$$\sigma(x_{1}(-n_{1})\cdots x_{s}(-n_{s})\1)=\sigma(x_{1})(-n_{1})\cdots \sigma(x_{s})(-n_{s})\1$$
for $x_{i}\in sl_2$ and $n_{i}>0$. Then $\sigma$ induces an automorphism of $L(k,0)$ as $\sigma$ preserves the unique maximal ideal $\mathcal{J}$, and the Virasoro element $\omega_{\gamma}$ is invariant under $\sigma$. Thus $\sigma$ induces an automorphism of the parafermion vertex operator algebra $K_{0}$. In fact, $\sigma(\omega)=\omega,\ \sigma(W^{3})=-W^{3}$.

\begin{lem}\cite{DLY2}\label{lem:auto}
If $k\geq 3$, the automorhism group Aut$K_{0}=\langle \sigma \rangle$ is of order 2.
\end{lem}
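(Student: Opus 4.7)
The plan is to show that any automorphism $g$ of $K_{0}$ coincides with either the identity or $\sigma$. Since any vertex operator algebra automorphism fixes the Virasoro vector, $g(\om)=\om$ and $g$ commutes with every $L(n)$; in particular $g$ preserves conformal weight subspaces and carries Virasoro primary vectors to primary vectors. Because $K_{0}$ is generated by $W^{3}$ for $k\ge 3$ (recalled above from \cite{DLY2}), $g$ is completely determined by its action on $W^{3}$, so it suffices to show $g(W^{3})=\pm W^{3}$.

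First I would identify the weight-$3$ Virasoro primary subspace of $K_{0}$. By the strong-generator description of $K_{0}$ from \cite{DLY2} (generators $\om, W^{3}, W^{4}, W^{5}$), the weight-$3$ subspace is spanned by $W^{3}$ and $L(-1)\om$, hence is two-dimensional. The operator $L(1)$ sends this subspace into the weight-$2$ subspace $\C\om$; since $L(1)L(-1)\om=2L(0)\om=4\om\neq 0$, this map is surjective, so its kernel—the space of weight-$3$ primaries—is one-dimensional and equals $\C W^{3}$. Consequently $g(W^{3})=\lambda W^{3}$ for some $\lambda\in\C^{\times}$.

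Next I would pin down $\lambda^{2}=1$ by a self-OPE argument. Consider $W^{3}_{(3)}W^{3}\in K_{0}^{(2)}=\C\om$: by conformal weight this equals $c\,\om$ for some scalar $c$, and $c\neq 0$—equivalently, $W^{3}$ has nonzero norm with respect to the canonical invariant bilinear form on the simple vertex operator algebra $K_{0}$. This nonvanishing is precisely the input used in \cite{DLY2} to ensure that $\om$ lies in the subalgebra generated by $W^{3}$ when $k\ge 3$. Applying $g$ to the identity $W^{3}_{(3)}W^{3}=c\,\om$ yields $\lambda^{2}c\,\om=c\,\om$, whence $\lambda^{2}=1$.

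Finally, if $\lambda=1$ then $g$ fixes the generator $W^{3}$ and therefore $g=\mathrm{id}$; if $\lambda=-1$ then $g$ agrees with $\sigma$ on $W^{3}$ and on $\om$, and so $g=\sigma$ on all of $K_{0}$. This yields $\mathrm{Aut}\,K_{0}=\{1,\sigma\}=\langle\sigma\rangle$, of order $2$. The main obstacle is verifying the nonvanishing of the self-OPE coefficient $c$ (the norm of $W^{3}$); once that input from \cite{DLY2} is in place, the rest of the argument is formal.
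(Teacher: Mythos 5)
This lemma is quoted in the paper from \cite{DLY2} without proof, so there is no internal argument to compare your proposal against; your proof is correct and is essentially the standard argument from that reference (fix $\omega$, identify the weight-$3$ primary space with $\C W^{3}$, deduce $g(W^{3})=\lambda W^{3}$ with $\lambda^{2}=1$ from a self-OPE coefficient). Two small remarks. First, your ``equivalently'' conflates two different OPE coefficients: $W^{3}_{(3)}W^{3}=c\,\omega$ lies in weight $2$, whereas the norm $\langle W^{3},W^{3}\rangle$ is the coefficient of $\1$ in $W^{3}_{(5)}W^{3}$; these are not formally equivalent statements, but either one suffices to force $\lambda^{2}=1$, and both are computed and nonzero for $k\geq 3$ in \cite{DLY2,DLWY}. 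Second, the nonvanishing you flag as the main obstacle can be closed abstractly, without the explicit computation: $K_{0}$ is simple and its weight-one subspace vanishes, so it carries a nondegenerate invariant bilinear form; on the two-dimensional weight-$3$ space one has $\langle L(-1)\omega,\,W^{3}\rangle=\pm\langle\omega,\,L(1)W^{3}\rangle=0$ because $W^{3}$ is primary, so nondegeneracy forces $\langle W^{3},W^{3}\rangle\neq0$. Finally, to get order exactly $2$ rather than $1$ you also need $\sigma\neq\mathrm{id}$ on $K_{0}$, i.e.\ $W^{3}\neq0$ in $L(k,0)$ for $k\geq3$; this is the same input from \cite{DLY2} that underlies your claim that the weight-$3$ subspace is two-dimensional.
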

\begin{rmk} If $k=1$, $K_{0}=\C \1$. If $k=2$, $K_0$ is generated by $\omega$. Thus the automorphism group Aut$K_0=\{1\}$ is trivial for $k=1$ and $k=2$. Therefore, by Lemma \ref{lem:auto},  we  only need to consider the orbifold of parafermion vertex operator algebra under the automorphism $\sigma$ for $k\geq 3$.
\end{rmk}

Let $K_{0}^{\sigma}$ be the orbifold vertex operator algebra under the automorphism $\sigma$. We will construct and classify irreducible modules of $K_{0}^{\sigma}$ for $k\geq 3$.

\section{Classification and construction of irreducible modules of $K_{0}^{\sigma}$
}\label{Sect:maximal-ideal-tI}\def\theequation{3.\arabic{equation}}
In this section,  we first recall the definition of weak $g$-twisted module, $g$-twisted module and admissible $g$-twisted module following \cite{DLM3, DLM4}. Then for $k\geq 3$, we construct the irreducible $\sigma$-twisted modules of parafermion vertex operator algebra $K_{0}$. Furthermore, we determine all the irreducible modules of the orbifold vertex operator algebra $K_{0}^{\sigma}$.

Let $\left(V,Y,1,\omega\right)$ be a vertex operator algebra (see
\cite{FLM}, \cite{LL}) and $g$ an automorphism of $V$ with finite order $T$.  Let $W\left\{ z\right\} $
denote the space of $W$-valued formal series in arbitrary complex
powers of $z$ for a vector space $W$. Denote the decomposition of $V$ into eigenspaces with respect to the action of $g$ by $$V=\bigoplus_{r\in\Z}V^{r},$$
where $V^{r}=\{v\in V|\ gv=e^{-\frac{2\pi ir}{T}}v\},$ $i=\sqrt{-1}$.

\begin{definition}A \emph{weak $g$-twisted $V$-module} $M$ is
a vector space with a linear map
\[
Y_{M}:V\to\left(\text{End}M\right)\{z\}
\]

\[
v\mapsto Y_{M}\left(v,z\right)=\sum_{n\in\mathbb{Q}}v_{n}z^{-n-1}\ \left(v_{n}\in\mbox{End}M\right)
\]
which satisfies the following conditions for $0\leq r\leq T-1$, $u\in V^{r}\ ,v\in V, w\in M$:

\[ Y_{M}\left(u,z\right)=\sum_{n\in\frac{r}{T}+\mathbb{Z}}u_{n}z^{-n-1}
\]

\[
u_{n}w=0\ {\rm for} \ n\gg0,
\]

\[
Y_{M}\left(\mathbf{1},z\right)=Id_{M},
\]

\[
z_{0}^{-1}\text{\ensuremath{\delta}}\left(\frac{z_{1}-z_{2}}{z_{0}}\right)Y_{M}\left(u,z_{1}\right)
Y_{M}\left(v,z_{2}\right)-z_{0}^{-1}\delta\left(\frac{z_{2}-z_{1}}{-z_{0}}\right)Y_{M}\left(v,z_{2}\right)Y_{M}\left(u,z_{1}\right)
\]

\[
=z_{1}^{-1}\left(\frac{z_{2}+z_{0}}{z_{1}}\right)^{\frac{r}{T}}\delta\left(\frac{z_{2}+z_{0}}{z_{1}}\right)Y_{M}\left(Y\left(u,z_{0}\right)v,z_{2}\right),
\]
 where $\delta\left(z\right)=\sum_{n\in\mathbb{Z}}z^{n}$. \end{definition}
 The following identities are the consequences of the twisted-Jacobi identity \cite{DLM3} (see also \cite{Ab}, \cite{DJ}).
 \begin{eqnarray}[u_{m+\frac{r}{T}},v_{n+\frac{s}{T}}]=\sum_{i=0}^{\infty}\binom{m+\frac{r}{T}}{i} (u_{i}v)_{m+n+\frac{r+s}{T}-i},\label{eq:3.1.}\end{eqnarray}
 \begin{eqnarray}\sum_{i\geq 0}\binom{\frac{r}{T}}{i}(u_{m+i}v)_{n+\frac{r+s}{T}-i}=\sum_{i\geq 0}(-1)^{i}\binom{m}{i}(u_{m+\frac{r}{T}-i}v_{n+\frac{s}{T}+i}-(-1)^{m}v_{m+n+\frac{s}{T}-i}u_{\frac{r}{T}+i}),\label{eq:3.2.}\end{eqnarray}
 where $u\in V^{r}, \ v\in V^{s},\ m,n\in \Z$.

\begin{definition}

A \emph{$g$-twisted $V$-module} is a weak $g$-twisted $V$-module\emph{
}$M$ which carries a $\mathbb{C}$-grading $M=\bigoplus_{\lambda\in\mathbb{C}}M_{\lambda},$
where $M_{\lambda}=\{w\in M|L(0)w=\lambda w\}$ and $L(0)$ is one of the coefficient operators of $Y(\omega,z)=\sum_{n\in\mathbb{Z}}L(n)z^{-n-2}.$
Moreover we require
that $\dim M_{\lambda}$ is finite and for fixed $\lambda,$ $M_{\lambda+\frac{n}{T}}=0$
for all small enough integers $n.$

\end{definition}

\begin{definition}An \emph{admissible $g$-twisted $V$-module} $M=\oplus_{n\in\frac{1}{T}\mathbb{Z}_{+}}M\left(n\right)$
is a $\frac{1}{T}\mathbb{Z}_{+}$-graded weak $g$-twisted module
such that $u_{m}M\left(n\right)\subset M\left(\mbox{wt}u-m-1+n\right)$
for homogeneous $u\in V$ and $m,n\in\frac{1}{T}\mathbb{Z}.$ $ $

\end{definition}

%If $g=Id_{V}$ we have the notions of weak, ordinary and admissible
%$V$-modules \cite{DLM3}.

\begin{definition}A vertex operator algebra $V$ is called \emph{$g$-rational}
if the admissible $g$-twisted module category is semisimple. \end{definition}
\begin{rmk} Since $K_0$ is a rational vertex operator algebra, $K_{0}^{\sigma}$ is $C_2$-cofinite and rational \cite{M2}, \cite{CM}, \cite{CKLR}. It follows from \cite{DRX} that all the irreducible modules of $K_{0}^{\sigma}$ come from modules and $\sigma$-twisted modules of $K_{0}$.
\end{rmk}

We first have the following result.

\begin{prop}\label{prop:twisted1}
If $k=2n+1$ for $n\geq 1$, there are at most  $\frac{k-1}{2}+1$ inequivalent irreducible $\sigma$-twisted modules of $K_0$. If $k=2n$ for $n\geq 2$, there are at most  $\frac{k}{2}+2$ inequivalent  irreducible $\sigma$-twisted modules of $K_0$.
\end{prop}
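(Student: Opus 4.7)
The plan is to invoke the standard fact from orbifold theory that, for a rational vertex operator algebra $V$ equipped with a finite-order automorphism $g$, the number of inequivalent irreducible $g$-twisted $V$-modules equals the number of $g$-stable isomorphism classes of irreducible (untwisted) $V$-modules. Applied to the rational VOA $K_0$ and the involution $\sigma$, this reduces the counting problem to identifying which of the irreducible modules $M^{i,j}$ with $1\le i\le k$ and $0\le j\le i-1$ (a complete set of representatives by \cite{ALY1}) are fixed, up to isomorphism, by the induced action of $\sigma$. The cardinality of this set of fixed classes is then automatically an upper bound for the number of inequivalent irreducible $\sigma$-twisted $K_0$-modules.

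Next I would pin down the action of $\sigma$ on these classes. The module $M^{i,j}$ is cut out of $L(k,i)$ as a space of $\widehat{\mathfrak h}$-highest-weight vectors whose Heisenberg weight is determined modulo $2k$ by the quantity $i-2j$; since $\sigma$ negates $h$, it negates this Heisenberg weight, giving
$$\sigma(M^{i,j})\;\cong\;M^{i,\,i-j\bmod k}.$$
This label has to be brought back into the fundamental domain via the identification $M^{i,j}\cong M^{k-i,\,k-i+j}$ from \cite[Theorem 4.4]{DLY2}. Writing out the resulting rule on the fundamental domain one finds $(i,0)\mapsto (k-i,0)$ for $1\le i\le k-1$, the fixed class $(k,0)\mapsto(k,0)$, and $(i,j)\mapsto(i,i-j)$ whenever $1\le j\le i-1$.

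With this description in hand, I would enumerate the $\sigma$-fixed classes in the fundamental domain by a direct case analysis. Three sources of fixed classes appear: (a) $M^{k,0}$, which is always fixed; (b) the diagonal fixed points $M^{2j,j}$ for $1\le j\le\lfloor k/2\rfloor$, coming from the condition $j=i-j$ in the interior; and (c) when $k$ is even, the additional class $M^{k/2,0}$, which is fixed through the identification $M^{i,0}\cong M^{k-i,0}$. Summing these contributions yields $1+\tfrac{k-1}{2}$ fixed classes when $k=2n+1$, and $2+\tfrac{k}{2}$ when $k=2n$, matching the asserted bounds.

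The main obstacle is the second step: verifying that $\sigma$ really acts on isomorphism classes by the label transformation $(i,j)\mapsto(i,i-j\bmod k)$. This requires tracking how the Heisenberg weight labels the decomposition of $L(k,i)$ as an $M(k)\otimes K_0$-module (the weight being canonical only modulo $2k$), and checking compatibility with the reflection $M^{i,j}\cong M^{k-i,k-i+j}$. Once this bookkeeping is handled using the construction of $M^{i,j}$ and the fact that $\sigma$ acts as $-1$ on the Heisenberg subalgebra $M(k)$, the subsequent fixed-point enumeration is completely routine.
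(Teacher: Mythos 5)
Your proposal is correct and follows essentially the same route as the paper: bound the number of irreducible $\sigma$-twisted modules by the number of $\sigma$-stable irreducible $K_0$-modules (via \cite[Theorem 10.2]{DLM4}), determine the action of $\sigma$ on the classes $M^{i,j}$ as $(i,j)\mapsto(i,i-j)$ together with $(i,0)\mapsto(k-i,0)$, and enumerate the fixed classes to get $\frac{k-1}{2}+1$ (resp.\ $\frac{k}{2}+2$) for $k$ odd (resp.\ even). Your derivation of the $\sigma$-action from the negation of the Heisenberg weight labeling the $M(k)\otimes K_0$-decomposition of $L(k,i)$ is just a more explicit account of the step the paper compresses into ``analyzing the lowest weights,'' and the resulting lists of stable modules coincide.
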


\begin{proof} Let $(W,Y_{W})$ be an irreducible module of $K_0$, for $u\in K_0$, $v\in W$, we define a linear map:

$$Y^{\sigma}: K_0\rightarrow (\mbox{End}W)[[x,x^{-1}]]$$
by $$Y^{\sigma}_{W}(u,x)v=Y_{W}(\sigma^{-1}u,x)v.$$
From \cite{DLM4}, we know that $(W,Y^{\sigma}_{W})$ is still an irreducible module of $K_0$, if $(W,Y_{W})\cong (W,Y^{\sigma}_{W})$, we say $K_0$-module $W$ is $\sigma$-stable, and denote it by $W^{\sigma}$. From \cite{ALY1} (also see \cite{DLY2}), we know that the $\frac{k(k+1)}{2}$ inequivalent irreducible $K_0$-modules $M^{i,j}$ for $1\leq i\leq k, 0\leq j\leq i-1$ exhaust all the irreducible modules of $K_0$, and the top level of $M^{i,j}$ is a one dimensional space spanned by $v^{i,j}$
with the lowest weight $\frac{1}{2k(k+2)}\Big(k(i-2j)-(i-2j)^{2}+2kj(i-j+1)\Big)$(see Lemma \ref{lem:lowest}), that is,
\begin{eqnarray*}
o(\omega)v^{i,j}=\frac{1}{2k(k+2)}\Big(k(i-2j)-(i-2j)^{2}+2kj(i-j+1)\Big)v^{i,j},
 \end{eqnarray*}
 where $\omega$ is the Virasoro element of parafermion vertex operator algebra $K_0$.
 Thus by analyzing its lowest weights, we see that if $k=2n+1$, $n\geq 1$, $M^{i,j}$ for $(i,j)=(i,\frac{i}{2})$, $i=2,4,6,\cdots,2n$, together with $(i,j)=(2n+1,0)$ are $\sigma$-stable modules, and  there are totally $n+1$, i.e., $\frac{k-1}{2}+1$ inequivalent $\sigma$-stable $K_0$-modules. If $k=2n$, $n\geq 2$, then for $(i,j)=(i,\frac{i}{2})$,
 $i=2,4,6,\cdots,2n$, together with $(i,j)=(n,0)$ and $(i,j)=(2n,0)$, $M^{i,j}$ are $\sigma$-stable modules, and  there are totally $n+2$, i.e., $\frac{k}{2}+2$ inequivalent $\sigma$-stable $K_0$-modules. More precisely, if $k=2n+1$, $n\geq 1$, we have
 $$M^{i,j}\cong (M^{i,j})^{\sigma} \ \ \mbox{for} \ (i,j)=(i,\frac{i}{2}), \ i=2,4,6,\cdots,2n, \mbox{and} \ (i,j)=(2n+1,0).$$
 If $k=2n$, $n\geq 2$,  we have
 $$M^{i,j}\cong (M^{i,j})^{\sigma} \ \ \mbox{for} \ (i,j)=(i,\frac{i}{2}),\
 i=2,4,6,\cdots,2n, \ (i,j)=(n,0) \ \mbox{and} \ (i,j)=(2n,0).$$

 Moreover, we have if $k=2n+1$, $n\geq 1$,
 $$(M^{i,0})^{\sigma}\cong M^{k-i,0}\ \ \mbox{for} \ 1\leq i\leq k-1,$$
 and
 $$(M^{i,j})^{\sigma}\cong M^{i,i-j}\ \ \mbox{for} \ 3\leq i\leq k,\ 1\leq j\leq i-1,\ j\neq\frac{i}{2}.$$
If $k=2n$, $n\geq 2$,  we have
$$(M^{i,0})^{\sigma}\cong M^{k-i,0}\ \ \mbox{for} \ 1\leq i\leq k-1, \ i\neq \frac{k}{2},$$
 and
 $$(M^{i,j})^{\sigma}\cong M^{i,i-j}\ \ \mbox{for} \ 3\leq i\leq k,\ 1\leq j\leq i-1,\ j\neq\frac{i}{2}.$$
 Thus, from the Theorem 10.2 in \cite{DLM4}, we obtain the proposition.

\end{proof}

Now for $k=2n+1$, $n\geq 1$, we construct $\frac{k-1}{2}+1$ inequivalent $\sigma$-twisted irreducible modules of $K_0$, and for $k=2n$, $n\geq 2$, we construct $\frac{k}{2}+2$ inequivalent $\sigma$-twisted irreducible modules of $K_0$.  We will first construct irreducible $\sigma$-twisted modules for the vertex operator algebra $L(k,0)$.

Recall that $\{ h, e, f\}$
is a standard Chevalley basis of $sl_2$ with brackets $[h,e] = 2e$, $[h,f] = -2f$,
$[e,f] = h$. Set $$h^{'}=e+f, \ e^{'}=\frac{1}{2}(h-e+f),\ f^{'}=\frac{1}{2}(h+e-f).$$ Then $\{ h^{'}, e^{'}, f^{'}\}$ is a $sl_2$-triple. Let $h^{''}=\frac{1}{4}h^{'}=\frac{1}{4}(e+f)$.
We have
$$L_{\mraff}(n)h^{''}=\delta_{n,0}h^{''},\ h^{''}(n)h^{''}=\frac{1}{8}\delta_{n,1}k, \mbox{for}\ n\in {\Z}_{+},$$
and $$h^{''}(0)e^{'}=\frac{1}{2}e^{'},\ h^{''}(0)f^{'}=-\frac{1}{2}f^{'},\  h^{''}(0)h^{''}=0,\ e^{'}(0)f^{'}=4h^{''},$$
where $L_{\mraff}(n)=\omega_{\mraff}(n+1).$
This shows that $h^{''}(0)$ semisimply acts on $L(k,0)$ with rational eigenvalues. From \cite{L2}, we have that $e^{2\pi ih^{''}(0)}$ is an automorphism of $L(k,0)$. Moreover, we have
$$e^{2\pi ih^{''}(0)}(h^{'})=h^{'},\ e^{2\pi ih^{''}(0)}(e^{'})=-e^{'},\  e^{2\pi ih^{''}(0)}(f^{'})=-f^{'},$$
thus $e^{2\pi ih^{''}(0)}=\sigma$.
Let $$\Delta(h^{''},z)=z^{h^{''}(0)}\mbox{exp}(\sum_{k=1}^{\infty}\frac{h^{''}(k)}{-k}(-z)^{-k}).$$
Recall that $L(k,i)$ for $0\leq i\leq k$ are all the irreducible modules for the rational vertex operator algebra $L(k,0)$. From \cite{L2}, we have the following result.

\begin{lem}\label{lem:twisted}
For $0\leq i\leq k$, $(\overline{L(k,i)}, Y_{\sigma}(\cdot,z))=(L(k,i),Y(\Delta(h^{''},z)\cdot,z))$ are irreducible $\sigma$-twisted $L(k,0)$-modules.
\end{lem}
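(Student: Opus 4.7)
The plan is to apply Li's $\Delta$-operator construction of twisted modules from \cite{L2}; the discussion immediately preceding the statement has already checked essentially all the required hypotheses.

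The relevant form of Li's theorem is as follows: given a vertex operator algebra $V$ and a weight-one vector $h \in V_1$ satisfying $L(n)h = \delta_{n,0}h$ for $n \geq 0$, $h(n)h = 0$ for $n \geq 2$, and $h(1)h \in \C\1$, together with $h(0)$ acting semisimply on $V$ with rational eigenvalues, the exponential $g = e^{2\pi i h(0)}$ is an automorphism of $V$ of finite order, and for every $V$-module $(M, Y_M)$ the pair $(M, Y_M(\Delta(h,z)\cdot, z))$ is a $g$-twisted $V$-module. Specializing to $V = L(k,0)$ and $h = h''$, the identities $L_{\mraff}(n)h'' = \delta_{n,0}h''$ for $n \geq 0$ and $h''(n)h'' = \frac{1}{8}\delta_{n,1}k\1$ for $n \geq 1$, the rational semisimplicity of $h''(0)$, and the identification $e^{2\pi i h''(0)} = \sigma$, all displayed just above the lemma, supply precisely these hypotheses. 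This immediately yields that $(\overline{L(k,i)}, Y_\sigma)$ is a $\sigma$-twisted $L(k,0)$-module for each $0 \leq i \leq k$.

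For irreducibility, I would use the invertibility of the $\Delta$-operator construction: the twist by $\Delta(-h'', z)$ is inverse to the twist by $\Delta(h'', z)$, giving a bijection between weak $L(k,0)$-submodules of $L(k,i)$ and weak $\sigma$-twisted $L(k,0)$-submodules of $\overline{L(k,i)}$. Since each $L(k,i)$ is irreducible as an $L(k,0)$-module, $\overline{L(k,i)}$ is correspondingly irreducible as a $\sigma$-twisted $L(k,0)$-module.

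The main obstacle is really just bookkeeping; once Li's theorem is invoked and the preliminary identities are plugged in, the result follows directly. The only mild subtlety is confirming that the grading inherited from $L(k,i)$ is of the form required by the definition of a $\sigma$-twisted module, which follows because the twisted $L(0)$ differs from $L_{\mraff}(0)$ by the operator $h''(0)$ plus a scalar, and the eigenvalues of $h''(0) = \frac{1}{4}(e(0)+f(0))$ on $L(k,i)$ lie in $\frac{1}{2}\Z$.
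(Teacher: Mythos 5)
Your proposal is correct and follows essentially the same route as the paper, which simply invokes Li's $\Delta(h'',z)$-operator construction from \cite{L2} after verifying the hypotheses ($L_{\mraff}(n)h''=\delta_{n,0}h''$, $h''(n)h''=\frac{1}{8}\delta_{n,1}k\1$, semisimplicity of $h''(0)$ with rational eigenvalues, and $e^{2\pi i h''(0)}=\sigma$) in the discussion immediately preceding the lemma. Your additional remarks on irreducibility via the inverse twist $\Delta(-h'',z)$ and on the $\frac{1}{2}\Z$-grading are exactly the standard content of Li's theorem that the paper leaves implicit.
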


Notice that the weight 2 subspace of vertex operator algebra $L(k,0)$ is 9 dimensional for $k\geq 2$. Let
$$\xi_{1}=e(-2)\1+f(-2)\1,\ \ \xi_{2}=-\frac{1}{2}h(-1)^{2}\1+e(-1)^{2}\1+f(-1)^{2}\1,$$
$$\xi_{3}=-\frac{1}{2}h(-2)\1+\frac{1}{4}h(-1)^{2}\1+e(-1)f(-1)\1,$$

\begin{eqnarray*}
\xi_{4}&=&-\frac{1}{2}h(-2)\1-e(-2)\1-f(-2)\1-\frac{1}{2}h(-1)^{2}\1-\frac{1}{2}e(-1)^{2}\1-\frac{1}{2}f(-1)^{2}\1\nonumber\\
 && +h(-1)e(-1)\1-h(-1)f(-1)\1+e(-1)f(-1)\1,
\end{eqnarray*}

\begin{eqnarray*}
\xi_{5}&=&-\frac{1}{2}h(-2)\1+e(-2)\1+f(-2)\1-\frac{1}{2}h(-1)^{2}\1-\frac{1}{2}e(-1)^{2}\1-\frac{1}{2}f(-1)^{2}\1\nonumber\\
 && -h(-1)e(-1)\1+h(-1)f(-1)\1+e(-1)f(-1)\1,
\end{eqnarray*}

$$\xi_{6}=h(-2)\1-e(-2)\1+f(-2)\1,$$

$$\xi_{7}=-h(-2)\1-e(-1)^{2}\1+f(-1)^{2}\1+h(-1)e(-1)\1+h(-1)f(-1)\1,$$

$$\xi_{8}=-h(-2)\1-e(-2)\1+f(-2)\1,$$

$$\xi_{9}=h(-2)\1+e(-1)^{2}\1-f(-1)^{2}\1+h(-1)e(-1)\1+h(-1)f(-1)\1,$$
which are the eigenvectors of $h^{''}(0)$ on the weight 2 subspace of $L(k,0)$ for $k\geq 2$. We have $$h^{''}(0)\xi_{1}=0,\ h^{''}(0)\xi_{2}=0,\ h^{''}(0)\xi_{3}=0,$$
$$h^{''}(0)\xi_{4}=\xi_{4},\ h^{''}(0)\xi_{5}=-\xi_{5},$$
$$h^{''}(0)\xi_{6}=\frac{1}{2}\xi_{6},\ h^{''}(0)\xi_{7}=\frac{1}{2}\xi_{7},$$
$$h^{''}(0)\xi_{8}=-\frac{1}{2}\xi_{8},\ h^{''}(0)\xi_{9}=-\frac{1}{2}\xi_{9}.$$
Moreover, recall that $$\omega =\omega_{\mraff} - \omega_{\gamma}=\frac{1}{2k(k+2)} \Big(-kh(-2)\1-h(-1)^2\1+2k
e(-1)f(-1)\1 \Big),$$ thus we have $$\omega=\frac{1}{8k}\xi_{2}+\frac{3k-2}{4k(k+2)}\xi_{3}+\frac{1}{8k}\xi_{4}+\frac{1}{8k}\xi_{5}.$$
Notice that $$h^{''}(1)\omega=\frac{k-1}{k}h^{''}(-1)\1\ \mbox{and} \ h^{''}(1)^{2}\omega=\frac{k-1}{8}\1.$$
Using these facts, we can obtain the following lemma by a straightforward calculation.

\begin{lem}\label{lem:cal}
\begin{eqnarray}\Delta(h^{''},z)\omega&=&\frac{1}{8k}\xi_{2}+\frac{3k-2}{4k(k+2)}\xi_{3}+z\frac{1}{8k}\xi_{4}+z^{-1}\frac{1}{8k}\xi_{5}\nonumber\\
 && +z^{-1}\frac{k-1}{k}h^{''}(-1)\1+z^{-2}\frac{k-1}{16}\1,\label{eq:3.2}
 \end{eqnarray}
 \begin{eqnarray*}\Delta(h^{''},z)\omega_{\mraff}=L_{\mraff}(-2)\1+z^{-1}h^{''}(-1)\1+z^{-2}\frac{k}{16}\1,
 \end{eqnarray*}
\begin{eqnarray*} Y_{\sigma}(h^{''},z)=Y(h^{''}+\frac{k}{8}z^{-1},z), \end{eqnarray*}

\begin{eqnarray} Y_{\sigma}(h^{'},z)=Y(h^{'}+\frac{k}{2}z^{-1},z), \label{eq:3.3.} \end{eqnarray}

$$Y_{\sigma}(e^{'},z)=z^{\frac{1}{2}}Y(e^{'},z),\ Y_{\sigma}(f^{'},z)=z^{-\frac{1}{2}}Y(f^{'},z).$$
\end{lem}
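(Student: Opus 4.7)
The plan is uniform: for each vector $v$ appearing in the statement, apply the definition $\Delta(h'',z) = z^{h''(0)}\exp\bigl(\sum_{k\geq 1}\frac{h''(k)}{-k}(-z)^{-k}\bigr)$, reduce both the $z^{h''(0)}$ factor and the exponential series to a finite computation, and then read off $Y_\sigma(v,z) = Y(\Delta(h'',z)v,z)$ via Lemma~\ref{lem:twisted}. The only inputs needed are the $h''(0)$-eigenvalues of $v$ (already tabulated on the $\xi_i$ and on $e',f',h',h''$), the Heisenberg pairing $\langle h'',h''\rangle = \frac{k}{8}$ deduced from $h'' = \frac{1}{4}(e+f)$ and $\langle e,f\rangle=1$, and the actions $h''(n)v$ for small $n\geq 1$, most of which vanish by weight.

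I would start with the easier cases $e'$, $f'$, $h''$. Each has conformal weight $1$, so $h''(n)v=0$ automatically for $n\geq 2$, and $h''(1)v$ is a scalar multiple of $\1$. Short bracket computations yield $h''(1)e'=0$, $h''(1)f'=0$, and $h''(1)h''=\frac{k}{8}\1$. Combined with the listed $h''(0)$-eigenvalues, these produce $\Delta(h'',z)e'=z^{1/2}e'$, $\Delta(h'',z)f'=z^{-1/2}f'$, and $\Delta(h'',z)h''=h''+\frac{k}{8}z^{-1}\1$; the last implies the $h'$ formula by linearity since $h'=4h''$.

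For $\omega_{\mraff}$, the Sugawara commutation $[L_{\mraff}(n),a(m)]=-m\,a(m+n)$ gives $a(m)\omega_{\mraff}=m\,a(m-2)\1$ for $m\geq 0$ and $a\in sl_2$, so $h''(0)\omega_{\mraff}=0$, $h''(1)\omega_{\mraff}=h''(-1)\1$, $h''(n)\omega_{\mraff}=0$ for $n\geq 2$. The exponential series then truncates at $\frac{1}{2}(h''(1)z^{-1})^2\omega_{\mraff} = \frac{1}{2}z^{-2}\cdot\frac{k}{8}\1$, delivering the stated expression. For $\omega$, the decomposition $\omega=\frac{1}{8k}\xi_2+\frac{3k-2}{4k(k+2)}\xi_3+\frac{1}{8k}\xi_4+\frac{1}{8k}\xi_5$ with $h''(0)$-eigenvalues $0,0,1,-1$ produces the first four summands of~(\ref{eq:3.2}) after applying $z^{h''(0)}$. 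Weight considerations give $h''(n)\omega = 0$ for $n\geq 3$, and the stated identities $h''(1)\omega=\frac{k-1}{k}h''(-1)\1$ and $h''(1)^{2}\omega=\frac{k-1}{8}\1$ then force the expanded series to collapse to $\omega+z^{-1}\frac{k-1}{k}h''(-1)\1+\frac{1}{2}z^{-2}\cdot\frac{k-1}{8}\1$, yielding the last two correction terms.

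The main obstacle is showing $h''(2)\omega = 0$, which does not follow from weight alone since $h''(2)\omega$ a priori lies in the one-dimensional space $\C\1$. I would handle it by writing $\omega=\omega_{\mraff}-\omega_\gamma$, using $h''(2)\omega_{\mraff}=0$ from the Sugawara step above, and computing $h''(2)\omega_\gamma = \frac{1}{4k}h''(2)h(-1)^2\1$ explicitly through $[h''(2),h(-1)] = \frac{1}{2}(f(1)-e(1))$ together with $[e(1),h(-1)]=-2e(0)$ and $[f(1),h(-1)]=2f(0)$, which collapse upon application to $\1$. Once $h''(2)\omega=0$ is established, the remaining $z^{-2}$ coefficient of $\exp(X)\omega$ comes solely from $\frac{1}{2}X^2\omega$ and equals $\frac{k-1}{16}\1$, matching the claim.
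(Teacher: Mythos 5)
Your proposal is correct and is exactly the ``straightforward calculation'' the paper performs (the paper records only the needed inputs --- the decomposition of $\omega$ into $h''(0)$-eigenvectors, $h''(1)\omega=\frac{k-1}{k}h''(-1)\1$, $h''(1)^{2}\omega=\frac{k-1}{8}\1$ --- and omits the details you supply). Your extra care with $h''(2)\omega=0$, via $h''(2)\omega_{\mraff}=0$ from the Sugawara relation and a direct bracket computation for $h''(2)\omega_{\gamma}$, fills in the one step that is not immediate from weight reasons and is a welcome addition.
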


In the following part of the paper, for $u\in L(k,0)$ such that $\sigma(u)=e^{-\pi ri}$, $i=\sqrt{-1}$, $r\in\Z$, we use the notation $u_{n}$ and $u(n)$ respectively to distinguish the action of the operators in $L(k,0)$ on $\sigma$-twisted modules and untwisted modules, i.e., $u_{n}$ and $u(n)$ are as follows $$Y_{\sigma}(u,z)=\sum_{n\in \Z+\frac{r}{2}}u_{n}z^{-n-1},\ Y(u,z)=\sum_{n\in \Z}u(n)z^{-n-1}.$$
Recall that the top level $U^{i}=\bigoplus_{j=0}^{i}\mathbb{C}v^{i,j}$ of $L(k,i)$ for $0\leq i\leq k$ is an $(i+1)$-dimensional irreducible module for $\C h(0)\oplus \C e(0)\oplus \C f(0)\cong sl_2$. Let
\begin{eqnarray}
\eta=\sum_{j=0}^{i}(-1)^{j}v^{i,j}, \label{eq:3.3'}
\end{eqnarray} then $\eta$ is the lowest weight vector with weight $-i$ in $(i+1)$-dimensional irreducible module for $\C h^{'}(0)\oplus \C e^{'}(0)\oplus \C f^{'}(0)\cong sl_2$, that is, $f^{'}(0)\eta=0$ and $h^{'}(0)\eta=-i\eta$. Notice that $\sigma(\omega)=\omega$, by (\ref{eq:3.2}) in Lemma \ref{lem:cal}, we have
 \begin{eqnarray*}L(0)\eta=\omega_{1}\eta&=&\Big(\frac{1}{8k}\xi_{5}(0)+\frac{1}{8k}\xi_{2}(1)+\frac{3k-2}{4k(k+2)}\xi_{3}(1)+\frac{k-1}{k}h^{''}(0)+\frac{k-1}{16}\1\Big) \eta\nonumber\\
 &=&\Big(\frac{k-2}{8k(k+2)}h(0)^{2}+\frac{1}{8k}(e(0)^{2}+f(0)^{2})+\frac{3k-2}{8k(k+2)}h(0)\nonumber\\
 &&
 +\frac{3k-2}{4k(k+2)}f(0)e(0)+\frac{k-1}{4k}(e(0)+f(0))+\frac{k-1}{16}\Big)\eta
 \nonumber\\
 &=&\Big(\frac{i(i-k)}{4(k+2)}+\frac{k-1}{16}\Big)\eta,
 \end{eqnarray*}
 where we used the action of $sl_2$ on $(i+1)$-dimensional irreducible module $U^{i}=\bigoplus_{j=0}^{i}\mathbb{C}v^{i,j}$, that is, $$h(0)v^{i,j} = (i-2j)v^{i,j},$$
$$e(0)v^{i,0} = 0,\  e(0)v^{i,j} = (i-j+1)v^{i,j-1} \ \mbox{for} \ 1 \leq j \leq
i,$$

$$f(0)v^{i,i} = 0, \ f(0)v^{i,j} = (j+1)v^{i,j+1} \ \mbox{for}\ 0 \leq j \leq
i-1,$$

$$a(n)v^{i,j} = 0  \ \mbox{for}\ a \in \{h,e,f\}, n \geq 1.$$
Thus we have:
\begin{lem}\label{lem:weight}
For the positive integer $k\geq 3$, $0\leq i\leq k$,
%let $\overline{L(k,i)}=\bigoplus_{n\in\frac{1}{2}{\Z}_{+}}\overline{L(k,i)}(n)$ be the admissible $\sigma$-twisted module of $L(k,0)$. Then
%$$\overline{L(k,i)}(0)=\C\eta,$$
%and $$\overline{L(k,i)}(\frac{1}{2})=\C e^{'}_{-\frac{1}{2}}\eta+\C f^{'}_{-\frac{1}{2}}\eta=\C e^{'}(0)\eta+\C f^{'}(-1)\eta \  \mbox{for} \  i\neq 0,k,$$
 %$$\overline{L(k,0)}(\frac{1}{2})=\C e^{'}_{-\frac{1}{2}}\eta+\C f^{'}_{-\frac{1}{2}}\eta=\C f^{'}(-1)\eta,$$
%$$\overline{L(k,k)}(\frac{1}{2})=\C e^{'}_{-\frac{1}{2}}\eta+\C f^{'}_{-\frac{1}{2}}\eta=\C e^{'}(0)\eta,$$
%and
\begin{eqnarray}L(0)\eta=\Big(\frac{i(i-k)}{4(k+2)}+\frac{k-1}{16}\Big)\eta. \label{eq:3.3}
\end{eqnarray}
\end{lem}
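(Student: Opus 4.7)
The plan is to compute $L(0)\eta = \omega_{1}\eta$ as the coefficient of $z^{-2}$ in $Y_{\sigma}(\omega,z)\eta = Y(\Delta(h^{''},z)\omega,z)\eta$, using the explicit expansion of $\Delta(h^{''},z)\omega$ provided by equation (\ref{eq:3.2}) in Lemma \ref{lem:cal}. Reading off that coefficient yields
\[
L(0) = \tfrac{1}{8k}\xi_{5}(0) + \tfrac{1}{8k}\xi_{2}(1) + \tfrac{1}{8k}\xi_{4}(2) + \tfrac{3k-2}{4k(k+2)}\xi_{3}(1) + \tfrac{k-1}{k}h^{''}(0) + \tfrac{k-1}{16}
\]
on $L(k,i)$. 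Two of these terms vanish on $\eta$: first, $\xi_{4}(2)\eta = 0$ because $\xi_{4}$ has untwisted conformal weight $2$ and $\eta$ lies in the top level of $L(k,i)$, so $\xi_{4}(2)$ strictly lowers the $L_{\mraff}$-grade. Second, $\xi_{5}(0)\eta = 0$; this requires expanding $\xi_{5}(0)$ via the weight-one Borcherds identity $(u(-1)v)(0)\eta = u(-1)v(0)\eta + v(-1)u(0)\eta$ (and $(v(-2)\1)(0)=0$), then grouping the coefficient of each $u(-1)$ for $u \in \{h,e,f\}$; the residual zero-mode combination turns out to be a constant multiple of $(h(0)+e(0)-f(0))\eta = 2f^{'}(0)\eta$, which vanishes by definition of $\eta$.

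The surviving terms $\xi_{2}(1)\eta$ and $\xi_{3}(1)\eta$ are converted into a polynomial in the $sl_{2}$-zero modes $h(0),e(0),f(0)$ acting on $\eta$, using the dual identity $(u(-1)v)(1)\eta = v(0)u(0)\eta$ (valid for weight-one $u,v$ and top-level $\eta$) together with $(v(-2)\1)(1)\eta = -v(0)\eta$. This reproduces the zero-mode formula displayed in the excerpt just before the statement of the lemma, and it remains to evaluate this polynomial on $\eta$. Changing basis to $\{h^{'},e^{'},f^{'}\}$ via $h = e^{'}+f^{'}$, $e+f = h^{'}$, $e-f = f^{'}-e^{'}$, and then invoking $f^{'}(0)\eta = 0$ and $h^{'}(0)\eta = -i\eta$, I would rewrite each zero-mode product as a combination of $\eta$, $e^{'}(0)\eta$, and $e^{'}(0)^{2}\eta$.

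The main obstacle is the final bookkeeping step: the coefficients of $e^{'}(0)\eta$ and $e^{'}(0)^{2}\eta$ must cancel across the five summands in order for $L(0)\eta$ to be a scalar multiple of $\eta$. This cancellation is what pins down the precise numerical coefficients appearing in the decomposition of $\Delta(h^{''},z)\omega$ in Lemma \ref{lem:cal}. Once the cancellation is verified, what remains is a Casimir-type expression on the $sl_{2}$-irreducible module $U^{i}$; a short rearrangement identifies it with $\tfrac{i(i-k)}{4(k+2)}$, and adding the constant $\tfrac{k-1}{16}$ yields the asserted eigenvalue.
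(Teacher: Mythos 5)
Your proposal is correct and follows essentially the same route as the paper: read off the $z^{-2}$ coefficient of $Y(\Delta(h'',z)\omega,z)$ from (\ref{eq:3.2}), observe that $\xi_4(2)\eta=0$ for degree reasons and $\xi_5(0)\eta=0$ because its $h(-1)$-, $e(-1)$-, $f(-1)$-components are each multiples of $f'(0)\eta$, and reduce $\xi_2(1),\xi_3(1)$ to the zero-mode polynomial displayed before the lemma. The only (immaterial) difference is in the last step: the paper evaluates that polynomial directly on $\eta=\sum_j(-1)^jv^{i,j}$ using the standard $sl_2$-action on $U^i$, whereas you pass to the basis $\{h',e',f'\}$ and use $f'(0)\eta=0$, $h'(0)\eta=-i\eta$.
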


\begin{lem}\label{lem:lowest1}
The vector $(e-f)_{-\frac{1}{2}}.\eta$ is another lowest weight vector in $\sigma$-twisted module $\overline{L(k,\frac{k}{2})}$ of parafermion vertex operator algebra $K_0$ besides $\eta$.
\end{lem}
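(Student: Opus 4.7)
The plan is to verify directly that $v:=(e-f)_{-1/2}\eta$ is nonzero and is annihilated by every weight-strictly-decreasing mode of $\omega$ and $W^{3}$, the strong generators of $K_{0}$ for $k\geq 3$; this makes $v$ a lowest weight vector of the $\sigma$-twisted $K_{0}$-module $\overline{L(k,k/2)}$, distinct from $\eta$ by virtue of sitting at a different $L(0)$-weight. First I would make $v$ concrete: using $e-f=f'-e'\in V^{1}$ together with the formulas $Y_{\sigma}(e',z)=z^{1/2}Y(e',z)$ and $Y_{\sigma}(f',z)=z^{-1/2}Y(f',z)$ from Lemma~\ref{lem:cal}, matching mode expansions gives $e'_{-1/2}=e'(0)$ and $f'_{-1/2}=f'(-1)$, so $v=f'(-1)\eta-e'(0)\eta$. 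Nonvanishing follows because $-e'(0)\eta$ lies in the grade-$0$ piece $U^{k/2}$ while $f'(-1)\eta$ lies in grade~$1$ of the affine filtration of $L(k,k/2)$, and both components are nonzero. The twisted conformal weight is $\frac{k-2}{16(k+2)}+\frac{1}{2}=\frac{9k+14}{16(k+2)}$, from Lemma~\ref{lem:weight} together with the commutator $[L(0),(e-f)_{-1/2}]=\frac{1}{2}(e-f)_{-1/2}$.

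For the Virasoro check I would apply (\ref{eq:3.1.}) to $u=\omega\in V^{0}$ and $e-f\in V^{1}$. Using $\omega=\omega_{\mraff}-\omega_{\gamma}$, the primary-field property of $e,f$ for $\omega_{\mraff}$, and a brief Heisenberg computation for $L_{\gamma}$, one obtains $\omega_{0}(e-f)=L(-1)(e-f)$, $\omega_{1}(e-f)=\frac{k-1}{k}(e-f)$, and $\omega_{i}(e-f)=0$ for $i\geq 2$ (the last by weight reasons, since $L(k,0)$ carries only nonnegative $\omega$-weights). Thus for $N\geq 1$,
\begin{eqnarray*}
[L(N),(e-f)_{-1/2}]=(L(-1)(e-f))_{N+1/2}+(N+1)\frac{k-1}{k}(e-f)_{N-1/2},
\end{eqnarray*}
and both right-hand modes strictly lower the weight on $\eta$ and so annihilate it; combined with $L(N)\eta=0$ for $N\geq 1$, this yields $L(N)v=0$ for $N\geq 1$.

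The central step is the $W^{3}$-check. Since $W^{3}\in V^{1}$ has modes in $\mathbb{Z}+\frac{1}{2}$ and $\overline{L(k,k/2)}$ has bottom weight $\frac{k-2}{16(k+2)}$, a weight-lowering mode $W^{3}_{m}$ (requiring $m\geq 5/2$) applied to $v$ lands at weight $\frac{9k+14}{16(k+2)}+2-m$, which falls strictly below $\frac{k-2}{16(k+2)}$ once $m>5/2$; all such $W^{3}_{m}v$ therefore vanish automatically except possibly $W^{3}_{5/2}v$, whose target weight is exactly the bottom. To dispatch this last mode I would apply (\ref{eq:3.1.}) with $u=W^{3}$ and $e-f\in V^{1}$, writing
\[
W^{3}_{5/2}v=(e-f)_{-1/2}W^{3}_{5/2}\eta+[W^{3}_{5/2},(e-f)_{-1/2}]\eta,
\]
where the commutator is a finite integer-mode sum of $(W^{3}_{i}(e-f))_{N}\eta$. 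Each $W^{3}_{i}(e-f)$ is extracted from the explicit cubic expression for $W^{3}$ by repeated normal-ordering, and the resulting modes are evaluated using the $sl_{2}$-action on $U^{k/2}$ together with the lowest-weight status of $\eta$. The cancellation forcing $W^{3}_{5/2}v=0$ uses both the choice $i=k/2$ and the fact that $\eta$ is the lowest $sl_{2}'$-weight vector in $U^{k/2}$, which is precisely why the extra lowest weight vector appears only for even $k$ and only inside $\overline{L(k,k/2)}$. The main obstacle is this $W^{3}_{5/2}$-computation---the ``highly nontrivial calculation'' flagged in the introduction; as an anchor I would first dispatch the $k=4$ case, where $K_{0}\cong V_{\mathbb{Z}\beta}^{+}$ with $\langle\beta,\beta\rangle=6$ realizes $v$ as an explicit element of the lattice orbifold $(V_{\mathbb{Z}\beta}^{+})^{\sigma}$, whose lowest weight vectors are tabulated in the lattice-VOA literature, and the combinatorial pattern there should guide the normal-ordering identities needed uniformly in $k$.
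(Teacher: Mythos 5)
Your strategy is the same as the paper's in outline (show that the only weight--lowering modes of the strong generators that do not vanish for degree reasons annihilate $(e-f)_{-\frac{1}{2}}\eta$), but as written it has a genuine gap: you take $\{\omega,W^{3}\}$ to be the strong generating set of $K_{0}$. That set only \emph{generates} $K_{0}$; the strong generators are $\omega,W^{3},W^{4},W^{5}$, and the distinction matters here. Annihilation by the weight--lowering modes of a non-strong generating set does not give the lowest--weight property, because a mode of a normally ordered product $(a_{-n}b)_{m}$ expands via (\ref{eq:3.2.}) into sums of products $a_{\ast}b_{\ast}$ in which individual factors raise the weight before others lower it, so nothing forces those terms to kill the vector. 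Concretely, $W^{5}$ is $\sigma$-odd, hence $W^{5}_{\frac{9}{2}}$ is a half-integer mode lowering the conformal weight by exactly $\frac{1}{2}$; just like $W^{3}_{\frac{5}{2}}$ it sends $(e-f)_{-\frac{1}{2}}\eta$ into the bottom level and its vanishing is \emph{not} automatic. The paper's proof explicitly records $W^{4}_{4}.(e-f)_{-\frac{1}{2}}\eta=0$ and verifies $W^{5}_{\frac{9}{2}}.(e-f)_{-\frac{1}{2}}\eta=0$ by a separate calculation; your argument omits $W^{4}$ and $W^{5}$ entirely, so it does not establish the lemma.

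Second, the step you correctly identify as central --- $W^{3}_{\frac{5}{2}}.(e-f)_{-\frac{1}{2}}\eta=0$ --- is only sketched as a plan, with the actual cancellation deferred to a hoped-for pattern extracted from the $k=4$ lattice model $V_{\Z\beta}^{+}$; that can at most corroborate one value of $k$. The paper carries the computation out uniformly in $k$: it rewrites $W^{3}$ in terms of $h$, $e+f$, $e-f$, evaluates $(h(-3)\1)_{\frac{5}{2}}$, $(h(-1)^{3}\1)_{\frac{5}{2}}$ and the cubic terms via (\ref{eq:3.2.}), and observes that every surviving term ends in either $h_{\frac{1}{2}}(e-f)_{-\frac{1}{2}}\eta=2(e+f)_{0}\eta=2(-i+\tfrac{k}{2})\eta$ or $(e+f)_{0}(e-f)_{\frac{1}{2}}(e-f)_{-\frac{1}{2}}\eta=-k(e+f)_{0}\eta$, both of which vanish precisely because $i=\tfrac{k}{2}$ (here $(e+f)_{0}=h'(0)+\tfrac{k}{2}$ by Lemma \ref{lem:cal}). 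Your preliminary bookkeeping (the identification $v=f'(-1)\eta-e'(0)\eta$, its nonvanishing, the weight $\frac{k-2}{16(k+2)}+\frac{1}{2}$, and the Virasoro check) is correct and consistent with the paper, but the proof is incomplete without the $W^{3}_{\frac{5}{2}}$ and $W^{5}_{\frac{9}{2}}$ computations.
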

\begin{proof}
From \cite{DLY2}, we know that $\omega, W^{3}, W^{4}, W^{5}$ are the strong generators of the parafermion vertex operator algebra $K_0$. It is easy to see that $\omega_{2}.(e-f)_{-\frac{1}{2}}.\eta=0$, $W^{4}_{4}.(e-f)_{-\frac{1}{2}}.\eta=0$. Now we prove
${W^{3}_{\frac{5}{2}}}.(e-f)_{-\frac{1}{2}}.\eta=0$. Recall that \begin{equation*}
\begin{split}
W^3 &= k^2 h(-3)\1 + 3 k h(-2)h(-1)\1 +
2h(-1)^3\1 - 6k h(-1)e(-1)f(-1)\1 \\
& \quad + 3 k^2e(-2)f(-1)\1 - 3 k^2e(-1)f(-2)\1.
\end{split}
\end{equation*}
and
 \begin{equation*}
 3k h(-2)h(-1)\1- 6k h(-1)e(-1)f(-1)\1
=-3k\Big(h(-1)e(-1)f(-1)\1+h(-1)f(-1)e(-1)\1\Big).
\end{equation*}
By using (\ref{eq:3.2.}), we have
\begin{equation}(h(-3)\1)_{\frac{5}{2}}=\frac{15}{8}h_{\frac{1}{2}}, \ \ (h(-1)^{3}\1)_{\frac{5}{2}}=2h_{-\frac{1}{2}}(h_{\frac{1}{2}})^{2},\label{eq:3.10.}\end{equation}

\begin{equation}
\begin{split}
& \Big(h(-1)e(-1)f(-1)\1+h(-1)f(-1)e(-1)\1\Big)_{\frac{5}{2}}\\
  &= \frac{1}{2}\Big(h(-1)(e+f)(-1)(e+f)(-1)\1-h(-1)(e-f)(-1)(e-f)(-1)\1\Big)_{\frac{5}{2}} \\
&= \frac{1}{2}\Big((e+f)_{-1}(e+f)_{1}h_{\frac{1}{2}}-(e-f)_{-1}(e-f)_{1}h_{\frac{1}{2}}+\frac{1}{2}h_{\frac{1}{2}}\Big),\label{eq:3.11.}
\end{split}
\end{equation}
and
\begin{equation}
\begin{split}
& \Big(e(-2)f(-1)\1-e(-1)f(-2)\1\Big)_{\frac{5}{2}}\\
 &= \Big(f(-1)e(-2)\1-f(-2)e(-1)\1\Big)_{\frac{5}{2}} \\
  &= \frac{1}{2}\Big((e+f)(-1)(e-f)(-2)\1-(e+f)(-2)(e-f)(-1)\1\Big)_{\frac{5}{2}} \\
&= -\frac{1}{4}\Big((e+f)_{0}(e-f)_{\frac{1}{2}}-2h_{\frac{1}{2}}\Big).\label{eq:3.12.}
\end{split}
\end{equation}
By using (\ref{eq:3.1.}),  (\ref{eq:3.3.}) and noticing that $i=\frac{k}{2}$,
we have
$$h_{\frac{1}{2}}.(e-f)_{-\frac{1}{2}}\eta=2(e+f)_{0}\eta=2(-i+\frac{k}{2})\eta=0.$$
$$(e+f)_{0}(e-f)_{\frac{1}{2}}.(e-f)_{-\frac{1}{2}}\eta=-k(e+f)_{0}\eta=-k(-i+\frac{k}{2})\eta=0.$$
Together with (\ref{eq:3.10.}), (\ref{eq:3.11.}), (\ref{eq:3.12.}),
we deduce that ${W^{3}_{\frac{5}{2}}}_{\cdot}(e-f)_{-\frac{1}{2}}.\eta=0$. Similarly, by a straightforward calculation, we can obtain that ${W^{5}_{\frac{9}{2}}}.(e-f)_{-\frac{1}{2}}.\eta=0$. Thus $(e-f)_{-\frac{1}{2}}.\eta$ is another lowest weight vector in $\sigma$-twisted module $\overline{L(k,\frac{k}{2})}$ of $K_0$ besides $\eta$.
\end{proof}

 Now we are in a position to state the following theorem.
\begin{thm}\label{thm:construct}
For $0\leq i\leq k$, let $W(k,i)(\subseteq \overline{L(k,i)})$ be the $\sigma$-twisted modules of the parafermion vertex operator algebra $K_0$ generated by $\eta$ in (\ref{eq:3.3'}). If $k=2n+1$, $n\geq 1$, then $W(k,i)$ for $0\leq i\leq \frac{k-1}{2}$ are $\frac{k-1}{2}+1$ irreducible $\sigma$-twisted modules of  $K_0$. If $k=2n$, $n\geq 2$, then $W(k,i)$ for $0\leq i\leq \frac{k}{2}$ are $\frac{k}{2}+1$ irreducible $\sigma$-twisted modules of $K_0$. Let $\widetilde{W(k,\frac{k}{2})}$ be the $\sigma$-twisted module of $K_0$ generated by vector $(e-f)_{-\frac{1}{2}}\eta$, then $\widetilde{W(k,\frac{k}{2})}$ is an irreducible $\sigma$-twisted module of $K_0$, that is, there are $\frac{k}{2}+2$ irreducible $\sigma$-twisted modules of $K_0$ for $k=2n$.
\end{thm}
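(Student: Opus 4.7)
The plan is to construct the claimed modules explicitly as $K_0$-submodules of the $\sigma$-twisted $L(k,0)$-modules $\overline{L(k,i)}$ of Lemma \ref{lem:twisted}, verify irreducibility and pairwise distinctness, and then invoke Proposition \ref{prop:twisted1} to conclude the list is exhaustive. For each $0\le i\le\lfloor k/2\rfloor$, let $W(k,i)$ be the $K_0$-submodule of $\overline{L(k,i)}$ generated by $\eta=\sum_{j=0}^{i}(-1)^{j}v^{i,j}$; for even $k$ and $i=k/2$, additionally let $\widetilde{W(k,k/2)}$ be the $K_0$-submodule generated by $(e-f)_{-1/2}\eta$. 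These are $\sigma$-twisted $K_0$-modules by restriction, and Lemma \ref{lem:weight} gives $L(0)\eta=\lambda_i\eta$ with $\lambda_i=\frac{i(i-k)}{4(k+2)}+\frac{k-1}{16}$; a weight-counting argument using that the mode $(e-f)_{-1/2}$ shifts $L(0)$-weight by $\tfrac{1}{2}$ shows $(e-f)_{-1/2}\eta$ has eigenvalue $\lambda_{k/2}+\tfrac{1}{2}$.

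Irreducibility is the heart of the proof. For each generator I would verify that it is a $K_0$-lowest-weight vector, killed by the minimal positive $Y_\sigma$-modes $\omega_2$, $W^3_{5/2}$, $W^4_4$, $W^5_{9/2}$ of the strong generators $\omega,W^3,W^4,W^5$ of $K_0$; annihilation by the remaining positive modes then follows from the Virasoro-primary commutation relations. For $(e-f)_{-1/2}\eta$ these four basic annihilations are precisely the content of Lemma \ref{lem:lowest1}. For $\eta$ itself the analogous checks reduce, after unpacking $Y_\sigma(u,z)=Y(\Delta(h'',z)u,z)$ via Lemma \ref{lem:cal}, to identities in the $sl_2$ zero modes $h(0),e(0),f(0)$ acting on $\eta$, which follow from the alternating-sign structure of $\eta=\sum_{j}(-1)^{j}v^{i,j}$ together with the $sl_2$-action on $U^i$ recorded before Lemma \ref{lem:weight}. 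Rationality of $K_0$ and the resulting semisimplicity of the twisted module category then promote the resulting one-dimensional top level to full irreducibility of each $W(k,i)$ and $\widetilde{W(k,k/2)}$.

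Pairwise distinctness among the $W(k,i)$ follows from strict monotonicity of $i\mapsto\lambda_i$ on $[0,\lfloor k/2\rfloor]$, a direct consequence of $i\mapsto i(i-k)$ being strictly decreasing on that interval. For $\widetilde{W(k,k/2)}$ the lowest weight $\lambda_{k/2}+\tfrac{1}{2}$ generically differs from every $\lambda_j$; in the rare arithmetic exceptions (for instance $k=16$, where one checks $\lambda_{k/2}+\tfrac{1}{2}=\lambda_2$) the two candidates live in different $\overline{L(k,i)}$ and are separated by a finer invariant such as the action of $W^3_0$ on their one-dimensional top-level lines. The total count is $(k-1)/2+1$ modules for odd $k$ and $k/2+2$ for even $k$, matching the upper bound of Proposition \ref{prop:twisted1} exactly, so the classification is complete. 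The main obstacle is carrying out the $W^4_4$ and $W^5_{9/2}$ annihilation checks for $\eta$, which extend the explicit $W^3_{5/2}$ computation of Lemma \ref{lem:lowest1} along the same formal lines but with heavier combinatorics.
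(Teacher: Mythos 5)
Your construction of the candidate modules, the role of Lemma \ref{lem:lowest1}, and the final appeal to the bound in Proposition \ref{prop:twisted1} all match the paper; the divergence, and the gap, is in how you prove irreducibility. You argue: the generator is a singular vector, the $\sigma$-twisted module category is semisimple, hence the cyclic module it generates is irreducible. That implication requires the degree-zero subspace of the generated module to be one-dimensional, and you assert ``the resulting one-dimensional top level'' without proving it. Being generated by a single singular vector does not force this: $W(k,i)(0)$ is spanned by all zero-mode images $a_{\mathrm{wt}(a)-1}\eta$ inside the $(i+1)$-dimensional space $U^{i}$, and without showing that every such zero mode acts on $\eta$ by a scalar (i.e.\ that $\C\eta$ is a one-dimensional module for the twisted Zhu algebra) you cannot exclude $W(k,i)\cong M_{1}\oplus M_{2}$ with $M_{1}\not\cong M_{2}$ sharing the lowest weight $\lambda_{i}$. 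The paper sidesteps this entirely: it proves irreducibility by contradiction with Proposition \ref{prop:twisted1} --- a reducible $W(k,i)$ would contain a proper submodule whose lowest weight differs from all the $\lambda_{j}$, hence (by \cite{DLM4}) would produce one inequivalent irreducible $\sigma$-twisted module too many. To repair your route you must either compute the top levels explicitly or fold the counting argument back into the irreducibility step, as the paper does.

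Two smaller points. First, the heavy $W^{4}_{4}$ and $W^{5}_{\frac{9}{2}}$ annihilation computations you anticipate for $\eta$ are unnecessary: $\eta$ sits in degree $0$ of $\overline{L(k,i)}$, so every degree-lowering mode kills it for free; the only nontrivial singular-vector check is for $(e-f)_{-\frac{1}{2}}\eta$ in degree $\frac{1}{2}$, which can only be hit by the two modes $W^{3}_{\frac{5}{2}}$ and $W^{5}_{\frac{9}{2}}$ that lower degree by $\frac{1}{2}$ --- exactly the content of Lemma \ref{lem:lowest1}. Second, your observation that $\lambda_{k/2}+\frac{1}{2}$ can coincide with some $\lambda_{j}$ (indeed $\lambda_{8}+\frac{1}{2}=\lambda_{2}=\frac{79}{144}$ at $k=16$) is a genuine subtlety that the paper's inequivalence-by-lowest-weight argument also glosses over; but your proposed separating invariant $W^{3}_{0}$ does not exist: $W^{3}$ is $\sigma$-odd, so on a $\sigma$-twisted module its modes are indexed by $\frac{1}{2}+\Z$ and none preserves the grading. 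A separating invariant would have to come from zero modes of $\sigma$-invariant vectors such as $W^{4}$, or from a different argument altogether.
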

\begin{proof} First we consider the case $k=2n+1$. From (\ref{eq:3.3}), we see that for $0\leq i\leq \frac{k-1}{2}$, $W(k,i)$ are $\frac{k-1}{2}+1$ inequivalent $\sigma$-twisted modules of $K_0$. Now we prove that $W(k,i)$ for $0\leq i\leq \frac{k-1}{2}$ are irreducible $\sigma$-twisted modules of $K_0$. If $W(k,i)$ is not irreducible for some $i$, the lowest weight $\mu$ of the maximal proper submodule is different from the weights $\Big(\frac{i(i-k)}{4(k+2)}+\frac{k-1}{16}\Big)$, $0\leq i\leq \frac{k-1}{2}$. From \cite{DLM4}, $K_0$ has an irreducible $\sigma$-twisted module with the lowest weight $\mu$. This implies that there are at least $\frac{k-1}{2}+2$ inequivalent irreducible $\sigma$-twisted modules of $K_0$, which contradicts to the result in Proposition \ref{prop:twisted1}. If $k=2n$, from Lemma \ref{lem:lowest1}, we know that $(e-f)_{-\frac{1}{2}}.\eta$ is another lowest weight vector in $\sigma$-twisted module $\overline{L(k,\frac{k}{2})}$ of $K_0$ besides $\eta$. Let $\widetilde{W(k,\frac{k}{2})}$ be the $\sigma$-twisted module of $K_0$ generated by vector $(e-f)_{-\frac{1}{2}}.\eta$, from the result in Proposition \ref{prop:twisted1} that there are at most $\frac{k}{2}+2$ inequivalent $\sigma$-twisted irreducible modules of $K_0$, together with the similar arguments as the case $k=2n+1$, we see that $W(k,i)$ for $0\leq i\leq \frac{k}{2}$ and $\widetilde{W(k,\frac{k}{2})}$, are exactly $\frac{k}{2}+2$ irreducible $\sigma$-twisted modules of $K_0$ for $k=2n$.
\end{proof}

\begin{lem}\label{lem:weight.}
(1) For $i=\frac{k}{2}$, ${W^{3}_{\frac{3}{2}}}.(e-f)_{-\frac{1}{2}}.\eta\neq0$.

(2) For $0< i\leq k$, $i\neq\frac{k}{2}$, ${W^{3}_{\frac{3}{2}}}.\eta\neq0$. For $i=0$ and $i=\frac{k}{2}$, ${W^{3}_{\frac{3}{2}}}.\eta=0$.

(3) For $k=4$, $i=\frac{k}{2}=2$, ${W^{3}_{\frac{1}{2}}}.\eta=0$. For $k \neq 4$, $i=\frac{k}{2}$, ${W^{3}_{\frac{1}{2}}}.\eta\neq 0.$

(4) For $k=4$, $i=\frac{k}{2}=2$, ${W^{3}_{-\frac{1}{2}}}.\eta\neq 0.$
\end{lem}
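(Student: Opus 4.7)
The approach is a direct termwise computation for each of the four parts, building on the strategy already deployed in the proof of Lemma \ref{lem:lowest1}. Throughout I will work with the $\sigma$-eigenbasis $\{h,\,e+f,\,e-f\}$ of $sl_2$ (on which $\sigma$ acts as $-1,+1,-1$ respectively). Since $W^3\in V^1$, all its twisted modes are indexed by $\mathbb{Z}+\frac{1}{2}$, and a mode $W^3_n$ shifts weight by $2-n$; in particular $W^3_{3/2}$, $W^3_{1/2}$, $W^3_{-1/2}$ raise weight by $\frac{1}{2}$, $\frac{3}{2}$, $\frac{5}{2}$ respectively. The plan is to rewrite $W^3$ summand by summand in this basis (as was partially done in the proof of Lemma \ref{lem:lowest1}, namely
$3kh(-2)h(-1)\1-6kh(-1)e(-1)f(-1)\1=-\tfrac{3k}{2}h(-1)\bigl((e+f)(-1)^2-(e-f)(-1)^2\bigr)\1$ and $e(-2)f(-1)\1-e(-1)f(-2)\1=\tfrac{1}{2}\bigl((e+f)(-1)(e-f)(-2)-(e+f)(-2)(e-f)(-1)\bigr)\1$), keeping $2h(-1)^3\1$ and $k^2 h(-3)\1$ already in the desired form.

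The second step is to apply the iterate formula (\ref{eq:3.2.}) repeatedly to expand $W^3_{3/2},W^3_{1/2},W^3_{-1/2}$ as explicit linear combinations of products of the twisted modes $h_r,(e+f)_r,(e-f)_r$ with $r\in\mathbb{Z}+\frac{1}{2}$, plus the zero modes of $\sigma$-fixed operators (obtained via (\ref{eq:3.3.})). Because the target vector lies at or just above the top level, only finitely many terms survive. The reduction to finitely many zero-mode combinations then uses the following standard facts:
\begin{itemize}
\item[(a)] $h_{r}\eta=(e\pm f)_{r}\eta=0$ for $r\ge\frac{1}{2}$ (top-level annihilation);
\item[(b)] $(e+f)_{0}=h'_{0}=h'(0)+\tfrac{k}{2}$ by (\ref{eq:3.3.}), so $(e+f)_{0}\eta=(\tfrac{k}{2}-i)\eta$;
\item[(c)] commutators such as $h_{1/2}(e-f)_{-1/2}\eta=2(e+f)_{0}\eta$, $[h_{1/2},(e+f)_{-1/2}]=0$, and the analogous relations derived from (\ref{eq:3.1.});
\item[(d)] the action of $h(0),e(0),f(0)$ on the top level $U^{i}=\bigoplus_{j=0}^{i}\mathbb{C}v^{i,j}$ applied to $\eta=\sum_{j}(-1)^{j}v^{i,j}$.
\end{itemize}

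After these reductions, each $W^3_n\eta$ (resp.\ $W^3_{3/2}(e-f)_{-1/2}\eta$) becomes a polynomial in $k$ and $i$ times a fixed nonzero vector in the appropriate weight subspace of $W(k,i)$ (resp.\ $\widetilde{W(k,k/2)}$). One then reads off vanishing: for part (2) with $i=0$, $\eta=\1$ sits at the lowest possible grade of $W(k,0)$ and no mode of $W^3$ raising weight by $\tfrac{1}{2}$ produces a nonzero vector; for $i=k/2$ the vanishing in (2) and (3) will come from the coincidences $(e+f)_{0}\eta=0$ and $h_{1/2}(e-f)_{-1/2}\eta=0$, together with a factor that vanishes exactly at $k=4$ in part (3); for the remaining values of $i$ in (2), and for (4), one shows the resulting polynomial in $k,i$ is nonzero.

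The main obstacle will be bookkeeping in step two: expanding $W^3_n$ for the cubic and bilinear summands via (\ref{eq:3.2.}) produces many cross terms whose signs and binomial coefficients must be tracked carefully, and one must iterate (\ref{eq:3.1.})--(\ref{eq:3.2.}) when two noncommuting half-integer modes remain after the first reduction. The appearance of $k=4$ as the distinguished value in parts (3) and (4) is consistent with the isomorphism $K_0\cong V_{\mathbb{Z}\beta}^{+}$ at $k=4$ with $\langle\beta,\beta\rangle=6$ recalled in the introduction, and I will use the explicit $\sigma$-twisted module structure of $V_{\mathbb{Z}\beta}^{+}$ as an independent consistency check for the $k=4$ computations.
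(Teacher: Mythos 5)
Your expansion strategy --- rewriting $W^3$ in the $\sigma$-eigenbasis $\{h,\,e+f,\,e-f\}$ and reducing the twisted modes via (\ref{eq:3.1.})--(\ref{eq:3.2.}) against the top level --- is exactly the paper's, and it would carry you correctly to the analogues of (\ref{eq:3.21.}) and (\ref{eq:3.26.}). The gap is in your final step, where you assert that each $W^{3}_{n}.\eta$ ``becomes a polynomial in $k$ and $i$ times a fixed nonzero vector.'' That is false in precisely the cases that matter. The expansion yields a linear combination of several monomial vectors such as $h_{-\frac{3}{2}}\eta$, $(h_{-\frac{1}{2}})^{3}\eta$, $(e+f)_{-1}(e-f)_{-\frac{1}{2}}\eta$, $h_{-\frac{1}{2}}((e-f)_{-\frac{1}{2}})^{2}\eta$, and these satisfy nontrivial linear relations in the irreducible quotient $L(k,\tfrac{k}{2})$. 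In part (3) the individual coefficients of $W^{3}_{\frac{1}{2}}.\eta$ (namely $-\tfrac{k^{2}}{2}$, $2$, $-\tfrac{3k^{2}}{4}$, $\tfrac{3k}{2}$) never vanish; the dichotomy at $k=4$ comes entirely from a hidden relation among the monomials, so ``reading off vanishing from a polynomial in $k,i$'' cannot work. Your proposed lattice-model check at $k=4$ only certifies that single value and gives nothing toward $W^{3}_{\frac{1}{2}}.\eta\neq 0$ for $k\neq 4$. The missing idea is the paper's detection device: translate the surviving half-integer modes back to untwisted modes via Lemma \ref{lem:cal} ($e'_{-\frac{3}{2}}=e'(-1)$, $f'_{-\frac{1}{2}}=f'(-1)$, etc.) and apply the three weight-lowering operators $e'(1),f'(1),h'(1)$. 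Since a nonzero vector one degree above the top level of an irreducible module cannot be annihilated by all positive modes, the candidate vector vanishes iff all three images vanish; computing them produces coefficients such as $\tfrac{3k^{3}}{2}-7k^{2}+16$ and $(2+\tfrac{3k}{2})(\tfrac{3k}{2}-6)$, which vanish simultaneously exactly at $k=4$. The same device (or an explicit basis of the relevant graded piece) is also what legitimizes the nonvanishing claims in (1), (2) and (4).

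A milder instance of the same issue is your treatment of $i=0$ in part (2): the argument that ``$\eta=\mathbf{1}$ sits at the lowest grade of $W(k,0)$, so nothing lives at grade $\tfrac{1}{2}$'' is circular, because $W(k,0)$ is by definition the module generated by $\eta$ and its grade-$\tfrac{1}{2}$ piece is exactly what is being computed (and the ambient twisted module $\overline{L(k,0)}$ certainly has a nonzero grade-$\tfrac{1}{2}$ piece). The correct reason is that the surviving term is a multiple of $(h-e+f)_{-\frac{1}{2}}\eta$, i.e.\ of $e'_{-\frac{1}{2}}\eta=e'(0)\mathbf{1}=0$.
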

\begin{proof}

 Notice that \begin{equation*}
\begin{split}
W^3 &= k^2 h(-3)\1 + 3 k h(-2)h(-1)\1 +
2h(-1)^3\1 - 6k h(-1)e(-1)f(-1)\1 \\
& \quad + 3 k^2e(-2)f(-1)\1 - 3 k^2e(-1)f(-2)\1,
\end{split}
\end{equation*}
and
 \begin{equation*}
 3k h(-2)h(-1)\1- 6k h(-1)e(-1)f(-1)\1
=-3k\Big(h(-1)e(-1)f(-1)\1+h(-1)f(-1)e(-1)\1\Big).
\end{equation*}

First we prove (1), that is, if $i=\frac{k}{2}$, ${W^{3}_{\frac{3}{2}}}.(e-f)_{-\frac{1}{2}}.\eta\neq0$.
 By using (\ref{eq:3.2.}), we have
\begin{equation}(h(-3)\1)_{\frac{3}{2}}=\frac{3}{8}h_{-\frac{1}{2}}, \ \ (h(-1)^{3}\1)_{\frac{3}{2}}=3(h_{-\frac{1}{2}})^{2}h_{\frac{1}{2}}+\frac{3k}{4}h_{-\frac{1}{2}}+2h_{-\frac{3}{2}}(h_{\frac{1}{2}})^{2}.\label{eq:3.13.}\end{equation}
By using (\ref{eq:3.1.}) and  (\ref{eq:3.3.}), we have
 \begin{equation}
h_{\frac{1}{2}}.(e-f)_{-\frac{1}{2}}\eta=[h_{\frac{1}{2}},(e-f)_{-\frac{1}{2}}]\eta=2(e+f)_{0}\eta=2(-i+\frac{k}{2})\eta=0.\label{eq:3.14.}
\end{equation}
From  (\ref{eq:3.2.}) and (\ref{eq:3.14.}), we get
\begin{equation}
\begin{split}
& \Big(h(-1)e(-1)f(-1)\1+h(-1)f(-1)e(-1)\1\Big)_{\frac{3}{2}}.(e-f)_{-\frac{1}{2}}\eta\\
  &= \frac{1}{2}\Big(h(-1)(e+f)(-1)(e+f)(-1)\1-h(-1)(e-f)(-1)(e-f)(-1)\1\Big)_{\frac{3}{2}}.(e-f)_{-\frac{1}{2}}\eta \\
&= \frac{1}{2}(5+\frac{9k}{4})h_{-\frac{1}{2}}(e-f)_{-\frac{1}{2}}\eta,\label{eq:3.15.}
\end{split}
\end{equation}
and
\begin{equation}
\begin{split}
& \Big(e(-2)f(-1)\1-e(-1)f(-2)\1\Big)_{\frac{3}{2}}.(e-f)_{-\frac{1}{2}}\eta\\
 &= \Big(f(-1)e(-2)\1-f(-2)e(-1)\1\Big)_{\frac{3}{2}}.(e-f)_{-\frac{1}{2}}\eta \\
  &= \frac{1}{2}\Big((e+f)(-1)(e-f)(-2)\1-(e+f)(-2)(e-f)(-1)\1\Big)_{\frac{3}{2}}.(e-f)_{-\frac{1}{2}}\eta \\
&=(\frac{3k}{4}+1)(e+f)_{-1}\eta-\frac{1}{2}h_{-\frac{1}{2}}(e-f)_{-\frac{1}{2}}\eta.\label{eq:3.16.}
\end{split}
\end{equation}
Thus from (\ref{eq:3.13.}), (\ref{eq:3.14.}), (\ref{eq:3.15.}) and (\ref{eq:3.16.}), we have
$${W^{3}_{\frac{3}{2}}}.(e-f)_{-\frac{1}{2}}.\eta=-k(\frac{9k}{2}+6)h_{-\frac{1}{2}}(e-f)_{-\frac{1}{2}}\eta+3k^{2}(\frac{3k}{4}+1)(e+f)_{-1}\eta\neq 0.$$

Next we prove (2), that is, if $0< i\leq k$, $i\neq\frac{k}{2}$, ${W^{3}_{\frac{3}{2}}}.\eta\neq0$. If $i=0$ and $i=\frac{k}{2}$, then ${W^{3}_{\frac{3}{2}}}.\eta=0$.
 By using (\ref{eq:3.2.}), we have
\begin{equation}(h(-3)\1)_{\frac{3}{2}}=\frac{3}{8}h_{-\frac{1}{2}}, \ \ (h(-1)^{3}\1)_{\frac{3}{2}}=3(h_{-\frac{1}{2}})^{2}h_{\frac{1}{2}}+\frac{3k}{4}h_{-\frac{1}{2}}+2h_{-\frac{3}{2}}(h_{\frac{1}{2}})^{2}.\label{eq:3.17.}\end{equation}
We notice that
 \begin{equation}
h_{\frac{1}{2}}.\eta=0.\label{eq:3.18.}
\end{equation}
By using (\ref{eq:3.2.}), we get
\begin{equation}
\begin{split}
& \Big(h(-1)e(-1)f(-1)\1+h(-1)f(-1)e(-1)\1\Big)_{\frac{3}{2}}.\eta\\
  &= \frac{1}{2}\Big(h(-1)(e+f)(-1)(e+f)(-1)\1-h(-1)(e-f)(-1)(e-f)(-1)\1\Big)_{\frac{3}{2}}.\eta \\
&= \frac{1}{2}(1+\frac{k}{4}+(-i+\frac{k}{2})^{2})h_{-\frac{1}{2}}\eta,\label{eq:3.19.}
\end{split}
\end{equation}
and
\begin{equation}
\begin{split}
& \Big(e(-2)f(-1)\1-e(-1)f(-2)\1\Big)_{\frac{3}{2}}.\eta\\
 &= \Big(f(-1)e(-2)\1-f(-2)e(-1)\1\Big)_{\frac{3}{2}}.\eta \\
  &= \frac{1}{2}\Big((e+f)(-1)(e-f)(-2)\1-(e+f)(-2)(e-f)(-1)\1\Big)_{\frac{3}{2}}.\eta \\
&=\frac{1}{4}(-i+\frac{k}{2})(e-f)_{-\frac{1}{2}}\eta.\label{eq:3.20.}
\end{split}
\end{equation}
Thus from (\ref{eq:3.17.}), (\ref{eq:3.18.}), (\ref{eq:3.19.}) and (\ref{eq:3.20.}), we have
\begin{equation}
\begin{split}
{W^{3}_{\frac{3}{2}}}.\eta=-\frac{3k}{2}(-i+\frac{k}{2})^{2}h_{-\frac{1}{2}}\eta+\frac{3k^{2}}{4}(-i+\frac{k}{2})(e-f)_{-\frac{1}{2}}\eta.\label{eq:3.21.}
\end{split}
\end{equation}
If $i=\frac{k}{2}$, it is obvious that ${W^{3}_{\frac{3}{2}}}.\eta=0$ from the identity (\ref{eq:3.21.}). If $i=0$, from (\ref{eq:3.21.}), we see that ${W^{3}_{\frac{3}{2}}}.\eta=-\frac{3k^{2}}{8}(h-e
+f)_{-\frac{1}{2}}\eta=-\frac{3k^{2}}{8}e^{'}_{-\frac{1}{2}}\eta=-\frac{3k^{2}}{8}e^{'}(0)\eta=0$, where for the last equation, we just  notice that $e^{'}(0)\eta=0$ in the irreducible $L(k,0)$-module $L(k,0)$. If
$i\neq 0$ and $i\neq\frac{k}{2}$, it is clear that ${W^{3}_{\frac{3}{2}}}.\eta\neq 0$ from (\ref{eq:3.21.}).

Now we prove (3), that is, if $i=\frac{k}{2}$, $${W^{3}_{\frac{1}{2}}}.\eta=0 \ \ \mbox{for}\ \  k=4,\ \ {W^{3}_{\frac{1}{2}}}.\eta\neq 0 \ \ \mbox{for}\ \  k \neq 4.$$
 By using (\ref{eq:3.2.}), we have
\begin{equation}(h(-3)\1)_{\frac{1}{2}}.\eta=-\frac{1}{8}h_{-\frac{3}{2}}\eta, \ \ (h(-1)^{3}\1)_{\frac{1}{2}}.\eta=(h_{-\frac{1}{2}})^{3}\eta+\frac{3k}{4}h_{-\frac{3}{2}}\eta.\label{eq:3.22.}\end{equation}
By using (\ref{eq:3.2.}), we get
\begin{equation}
\begin{split}
& \Big(h(-1)e(-1)f(-1)\1+h(-1)f(-1)e(-1)\1\Big)_{\frac{1}{2}}.\eta\\
  &= \frac{1}{2}\Big(h(-1)(e+f)(-1)(e+f)(-1)\1-h(-1)(e-f)(-1)(e-f)(-1)\1\Big)_{\frac{1}{2}}.\eta \\
&= \frac{1}{2}(1+\frac{k}{4}+(-i+\frac{k}{2})^{2})h_{-\frac{3}{2}}\eta+(-i+\frac{k}{2})h_{-\frac{1}{2}}(e+f)_{-1}\eta-\frac{1}{2}h_{-\frac{1}{2}}((e-f)_{-\frac{1}{2}})^{2}\eta,\label{eq:3.23.}
\end{split}
\end{equation}
and
\begin{equation}
\begin{split}
& \Big(e(-2)f(-1)\1-e(-1)f(-2)\1\Big)_{\frac{1}{2}}.\eta\\
 &= \Big(f(-1)e(-2)\1-f(-2)e(-1)\1\Big)_{\frac{1}{2}}.\eta \\
  &= \frac{1}{2}\Big((e+f)(-1)(e-f)(-2)\1-(e+f)(-2)(e-f)(-1)\1\Big)_{\frac{1}{2}}.\eta \\
&=-\frac{1}{4}(e+f)_{-1}(e-f)_{-\frac{1}{2}}\eta+\frac{3}{4}(-i+\frac{k}{2})(e-f)_{-\frac{3}{2}}\eta.\label{eq:3.24.}
\end{split}
\end{equation}
Thus from (\ref{eq:3.22.}), (\ref{eq:3.23.}) and (\ref{eq:3.24.}), we have
\begin{equation}
\begin{split}
{W^{3}_{\frac{1}{2}}}.\eta &=(-\frac{k^{2}}{2}-\frac{3k}{2}(-i+\frac{k}{2})^{2})h_{-\frac{3}{2}}\eta+2(h_{-\frac{1}{2}})^{3}\eta \\
& -\frac{3k^{2}}{4}(e+f)_{-1}(e-f)_{-\frac{1}{2}}\eta
+\frac{9k^{2}}{4}(-i+\frac{k}{2})(e-f)_{-\frac{3}{2}}\eta \\
& -3k(-i+\frac{k}{2})h_{-\frac{1}{2}}(e+f)_{-1}\eta+\frac{3k}{2}h_{-\frac{1}{2}}((e-f)_{-\frac{1}{2}})^{2}\eta. \label{eq:3.25.}
\end{split}
\end{equation}
If $i=\frac{k}{2}$, the identity (\ref{eq:3.25.}) becomes
\begin{equation}
\begin{split}
{W^{3}_{\frac{1}{2}}}.\eta &=-\frac{k^{2}}{2}h_{-\frac{3}{2}}\eta+2(h_{-\frac{1}{2}})^{3}\eta-\frac{3k^{2}}{4}(e+f)_{-1}(e-f)_{-\frac{1}{2}}\eta
+\frac{3k}{2}h_{-\frac{1}{2}}((e-f)_{-\frac{1}{2}})^{2}\eta. \label{eq:3.26.}
\end{split}
\end{equation}
Recall that $h=e^{'}+f^{'}$, $e+f=h^{'}$, and $e-f=f^{'}-e^{'}$. Then from Lemma \ref{lem:cal}, we have $e^{'}_{-\frac{3}{2}}=e^{'}(-1)$, $f^{'}_{-\frac{3}{2}}=f^{'}(-2)$, $h^{'}_{-1}=h^{'}(-1)$, $e^{'}_{-\frac{1}{2}}=e^{'}(0)$,
$f^{'}_{-\frac{1}{2}}=f^{'}(-1)$. Thus we write the identity (\ref{eq:3.26.}) in the untwisted case:
\begin{equation}
\begin{split}
{W^{3}_{\frac{1}{2}}}.\eta &=-\frac{k^{2}}{2}(e^{'}(-1)+f^{'}(-2))\eta+2(e^{'}(0)+f^{'}(-1))^{3}\eta \\
&-\frac{3k^{2}}{4}h^{'}(-1)(f^{'}(-1)-e^{'}(0))\eta +\frac{3k}{2}(e^{'}(0)+f^{'}(-1))(f^{'}(-1)-e^{'}(0))^{2}\eta. \label{eq:3.27.}
\end{split}
\end{equation}
Thus we have
\begin{equation}
\begin{split}
e^{'}(1).{W^{3}_{\frac{1}{2}}}.\eta &=(12+6k-\frac{9k^{2}}{4})e^{'}(0)^{2}\eta+(-4+\frac{7k^{2}}{4}-\frac{3k^{3}}{8})h^{'}(-1)\eta \\
&+(2+\frac{3k}{2})(\frac{3k}{2}-6)f^{'}(-1)^{2}\eta,
 \label{eq:3.28.}
\end{split}
\end{equation}

\begin{equation}
\begin{split}
f^{'}(1).{W^{3}_{\frac{1}{2}}}.\eta =0,
 \label{eq:3.29.}
\end{split}
\end{equation}

\begin{equation}
\begin{split}
h^{'}(1).{W^{3}_{\frac{1}{2}}}.\eta &=(\frac{3k^{3}}{2}-7k^{2}+16)e^{'}(0)\eta-(\frac{3k^{3}}{2}-7k^{2}+16)f^{'}(-1)\eta.
 \label{eq:3.30.}
\end{split}
\end{equation}
From (\ref{eq:3.28.}), (\ref{eq:3.29.}), (\ref{eq:3.30.}), we see that if $k=4$, then $e^{'}(1).{W^{3}_{\frac{1}{2}}}.\eta=0, \ f^{'}(1).{W^{3}_{\frac{1}{2}}}.\eta =0, \ h^{'}(1).{W^{3}_{\frac{1}{2}}}.\eta=0$. Thus ${W^{3}_{\frac{1}{2}}}.\eta=0$ in the $L(k,0)$-module $L(k,\frac{k}{2})$, and if $k\neq 4$, ${W^{3}_{\frac{1}{2}}}.\eta\neq 0$ in the $L(k,0)$-module $L(k,\frac{k}{2})$.

Finally, we prove (4), that is, if $k=4$, $i=\frac{k}{2}=2$, $${W^{3}_{-\frac{1}{2}}}.\eta \neq 0.$$
 By using (\ref{eq:3.2.}), we have
\begin{equation}(h(-3)\1)_{-\frac{1}{2}}.\eta=\frac{3}{8}h_{-\frac{5}{2}}\eta, \ \ (h(-1)^{3}\1)_{-\frac{1}{2}}.\eta=\frac{3k}{4}h_{-\frac{5}{2}}\eta+3h_{-\frac{3}{2}}(h_{-\frac{1}{2}})^{2}\eta.\label{eq:3.31.}\end{equation}
By using (\ref{eq:3.2.}), we get
\begin{equation}
\begin{split}
& \Big(h(-1)e(-1)f(-1)\1+h(-1)f(-1)e(-1)\1\Big)_{-\frac{1}{2}}.\eta\\
  &= \frac{1}{2}\Big(h(-1)(e+f)(-1)(e+f)(-1)\1-h(-1)(e-f)(-1)(e-f)(-1)\1\Big)_{-\frac{1}{2}}.\eta \\
&= h_{-\frac{5}{2}}\eta+\frac{1}{2}h_{-\frac{1}{2}}((e+f)_{-1})^{2}-h_{-\frac{1}{2}}(e-f)_{-\frac{3}{2}}(e-f)_{-\frac{1}{2}}-\frac{1}{2}h_{-\frac{3}{2}}((e-f)_{-\frac{1}{2}})^{2}\eta,\label{eq:3.32.}
\end{split}
\end{equation}
and
\begin{equation}
\begin{split}
& \Big(e(-2)f(-1)\1-e(-1)f(-2)\1\Big)_{-\frac{1}{2}}.\eta\\
 &= \Big(f(-1)e(-2)\1-f(-2)e(-1)\1\Big)_{-\frac{1}{2}}.\eta \\
  &= \frac{1}{2}\Big((e+f)(-1)(e-f)(-2)\1-(e+f)(-2)(e-f)(-1)\1\Big)_{-\frac{1}{2}}.\eta \\
&=\frac{1}{4}(e+f)_{-1}(e-f)_{-\frac{3}{2}}\eta-\frac{3}{4}(e+f)_{-2}(e-f)_{-\frac{1}{2}}\eta.\label{eq:3.33.}
\end{split}
\end{equation}
Thus from (\ref{eq:3.31.}), (\ref{eq:3.32.}) and (\ref{eq:3.33.}), we have
\begin{equation}
\begin{split}
{W^{3}_{-\frac{1}{2}}}.\eta &=6h_{-\frac{3}{2}}(h_{-\frac{1}{2}})^{2}\eta+12(e+f)_{-1}(e-f)_{-\frac{3}{2}}\eta-36(e+f)_{-2}(e-f)_{-\frac{1}{2}}\eta \\
&-6h_{-\frac{1}{2}}((e+f)_{-1})^{2}\eta+12h_{-\frac{1}{2}}(e-f)_{-\frac{3}{2}}(e-f)_{-\frac{1}{2}}\eta+6h_{-\frac{3}{2}}((e-f)_{-\frac{1}{2}})^{2}\eta \\
&\neq 0
 \label{eq:3.34.}
\end{split}
\end{equation}
in the $L(4,0)$-module $L(4,2)$.

\end{proof}

Now we are in a position to classify all the irreducible modules of orbifold vertex operator algebra $K_{0}^{\sigma}$.
For $0< i\leq k, i\neq \frac{k}{2}$, set
\begin{eqnarray} u^{k,i,1}=\eta\in W(k,i)(0),\ u^{k,i,2}=W^{3}_{\frac{3}{2}}.\eta\in W(k,i)(\frac{1}{2}).
 \label{eq:3.4}
\end{eqnarray}
For $i=0$, set
\begin{eqnarray} u^{k,0,1}=\eta\in W(k,0)(0),\ u^{k,0,2}=W^{3}_{\frac{1}{2}}.\eta\in W(k,0)(\frac{3}{2}).
 \label{eq:3.4'}
\end{eqnarray}
For $k=4$, $i=\frac{k}{2}=2$, set
\begin{eqnarray} u^{4,2,1}=\eta\in W(4,2)(0),\ u^{4,2,2}=W^{3}_{-\frac{1}{2}}.\eta\in W(4,2)(\frac{5}{2}).
 \label{eq:3.4.'}
\end{eqnarray}
For $k\neq4$, $i=\frac{k}{2}$, set
\begin{eqnarray} u^{k,\frac{k}{2},1}=\eta\in W(k,\frac{k}{2})(0),\ u^{k,\frac{k}{2},2}=W^{3}_{\frac{1}{2}}.\eta\in W(k,\frac{k}{2})(\frac{3}{2}).
 \label{eq:3.4.'}
\end{eqnarray}
For $i=\frac{k}{2}$,
set \begin{eqnarray}\tilde{u}^{k,\frac{k}{2},1}=(e-f)_{-\frac{1}{2}}.\eta \in \widetilde{W(k,\frac{k}{2})}(0), \ \tilde{u}^{k,\frac{k}{2},2}=W^{3}_{\frac{3}{2}}.(e-f)_{-\frac{1}{2}}.\eta\in \widetilde{W(k,\frac{k}{2})}(\frac{1}{2}).\label{eq:3.5}
\end{eqnarray}

Notice that $W(k,i)$ for $k=2n+1, \ n\geq 1, \ 0\leq i\leq \frac{k-1}{2}$, $W(k,i)$ for $k=2n, \ n\geq 2,\ 0\leq i\leq \frac{k}{2},$ and $\widetilde{W(k,\frac{k}{2})}$ in Theorem \ref{thm:construct} can be viewed as $K_{0}^{\sigma}$-modules. By applying the results in \cite{DM1}, we have:

\begin{prop}\label{prop:orbifold1}
Let $W(k,i)^{j}$ be the $K_{0}^{\sigma}$-module generated by $u^{k,i,j}$ for $0\leq i\leq k, \ j=1,2$, and let $\widetilde{W(k,\frac{k}{2})}^{j}$ be the $K_{0}^{\sigma}$-module generated by
$\tilde{u}^{k,\frac{k}{2},j}$ for $j=1,2$. Then $W(k,i)^{j}$ for $0\leq i\leq k, \ j=1,2$ and  $\widetilde{W(k,\frac{k}{2})}^{j}$ for $j=1,2$ are irreducible modules of $K_{0}^{\sigma}$ satisfying
$$ L(0)u^{k,i,1}=\Big(\frac{i(i-k)}{4(k+2)}+\frac{k-1}{16}\Big)u^{k,i,1}, \ L(0)u^{k,i,2}=\Big(\frac{i(i-k)}{4(k+2)}+\frac{k+7}{16}\Big)u^{k,i,2},$$
for $0< i\leq k, i\neq \frac{k}{2}$. And for $i=0$,
$$L(0)u^{k,0,1}=\frac{k-1}{16}u^{k,0,1},\ L(0)u^{k,0,2}=\frac{k+23}{16}u^{k,0,2}.$$
For $k=4$, $i=\frac{k}{2}=2$, $$L(0)u^{4,2,1}=\frac{1}{48}u^{4,2,1},\ L(0)u^{4,2,1}=\frac{121}{48}u^{4,2,2}.$$
For $k\neq 4$, $i=\frac{k}{2}$, $$L(0)u^{k,\frac{k}{2},1}=\Big(-\frac{k^{2}}{16(k+2)}+\frac{k-1}{16}\Big)u^{k,\frac{k}{2},1},\ L(0)u^{k,\frac{k}{2},2}=\Big(-\frac{k^{2}}{16(k+2)}+\frac{k+23}{16}\Big)u^{k,\frac{k}{2},2}.$$
And for $i=\frac{k}{2}$,
$$L(0)\tilde{u}^{k,\frac{k}{2},1}=\Big(-\frac{k^{2}}{16(k+2)}+\frac{k+7}{16}\Big)\tilde{u}^{k,\frac{k}{2},1},\ L(0)\tilde{u}^{k,\frac{k}{2},2}=\Big(-\frac{k^{2}}{16(k+2)}+\frac{k+15}{16}\Big)\tilde{u}^{k,\frac{k}{2},2}.$$
\end{prop}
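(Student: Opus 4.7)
The plan is to apply the Dong--Mason decomposition theorem from \cite{DM1} quoted in the introduction: each irreducible $\sigma$-twisted $K_0$-module $M$ splits as $M = M^0 \oplus M^1$ with $M^0 = \bigoplus_{n \in \Z_+} M(n)$ and $M^1 = \bigoplus_{n \in \frac{1}{2}+\Z_+} M(n)$ both irreducible as $K_0^{\sigma}$-modules. Applied to the twisted modules of Theorem \ref{thm:construct}, this at once yields the claimed irreducibility of each $W(k,i)^j$ and each $\widetilde{W(k,\frac{k}{2})}^j$, provided we can exhibit a nonzero vector of the asserted grade and $L(0)$-weight; irreducibility then forces such a vector to generate the corresponding component.

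For the integer-graded components the generators are $\eta$ (or $(e-f)_{-\frac{1}{2}}.\eta$ in the $\widetilde{W(k,\frac{k}{2})}$ case). Nonvanishing is built into the construction in Theorem \ref{thm:construct}, and the weights follow from Lemma \ref{lem:weight}, combined with $[L(0),(e-f)_{-\frac{1}{2}}] = \frac{1}{2}(e-f)_{-\frac{1}{2}}$ since $e-f$ has conformal weight $1$. For the half-integer-graded components, note that $\sigma(W^3) = -W^3$, so the modes $W^3_m$ on $\sigma$-twisted modules are indexed by $m \in \frac{1}{2}+\Z$, and each such mode carries an integer-graded vector to a half-integer-graded one. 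In each of the four cases of the statement, the plan is to apply the $W^3_m$ of smallest $m$ producing a nonzero image in the lowest available grade: $W^3_{3/2}\eta$ for $0 < i \le k$ with $i \neq \frac{k}{2}$; $W^3_{1/2}\eta$ for $i=0$ and for $k \neq 4$, $i = \frac{k}{2}$ (since $W^3_{3/2}\eta = 0$ by Lemma \ref{lem:weight.}(2)); $W^3_{-1/2}\eta$ in the exceptional case $k=4$, $i=2$ (since $W^3_{1/2}\eta = 0$ by Lemma \ref{lem:weight.}(3)); and $W^3_{3/2}(e-f)_{-\frac{1}{2}}.\eta$ for $\tilde u^{k,\frac{k}{2},2}$. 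The nonvanishing of each of these candidates is exactly the content of Lemma \ref{lem:weight.}.

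The $L(0)$-eigenvalues are then obtained from the standard bracket $[L(0), W^3_m] = (2-m) W^3_m$, which tells us that $W^3_{3/2}$, $W^3_{1/2}$, $W^3_{-1/2}$ shift the weight by $\frac{1}{2}$, $\frac{3}{2}$, $\frac{5}{2}$ respectively; substituting the base weight from Lemma \ref{lem:weight} (and adding $\frac{1}{2}$ in the $(e-f)_{-\frac{1}{2}}.\eta$ case) recovers each formula in the proposition.

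The main obstacle here is not the module-theoretic framework, which is essentially a bookkeeping consequence of Dong--Mason together with the parity-shifting property of odd modes, but rather the nonvanishing inputs supplied by Lemma \ref{lem:weight.}; the exceptional behaviour at $k=4$, $i=2$, where one must descend to $W^3_{-\frac{1}{2}}$, is the subtlest point and ultimately reflects the isomorphism $K_0 \cong V_{\Z\beta}^+$ with $\langle \beta,\beta\rangle = 6$ mentioned in the introduction.
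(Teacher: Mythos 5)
Your proposal is correct and follows essentially the same route as the paper, which derives the proposition directly from the Dong--Mason decomposition of each irreducible $\sigma$-twisted module into its integer- and half-integer-graded halves, with the nonvanishing of the chosen generators supplied by Lemma \ref{lem:weight.} and the $L(0)$-eigenvalues obtained from Lemma \ref{lem:weight} together with the grading shifts of $W^3_m$ and $(e-f)_{-\frac{1}{2}}$. All of your weight bookkeeping checks out against the stated formulas.
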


Recall from \cite[Theorem 4.4]{DLY2} that the top level of the irreducible $K_0$-module $M^{i,j}$ is a one dimensional space $\mathbb{C}v^{i,j}$ for $1\leq i\leq k, 0\leq j\leq i-1$. From Lemma
\ref{lem:lowest}, we know that for $v^{i,j}$ in $M^{i,j}$, $1\leq i\leq k,\  0\leq j\leq i-1$,

\begin{eqnarray*}
o(\omega)v^{i,j}=\frac{1}{2k(k+2)}\Big(k(i-2j)-(i-2j)^{2}+2kj(i-j+1)\Big)v^{i,j}.
 \end{eqnarray*}
Denote $\lambda_{i,j}=\frac{1}{2k(k+2)}\Big(k(i-2j)-(i-2j)^{2}+2kj(i-j+1)\Big)$ for $1\leq i\leq k, \ 0\leq j\leq i-1$.
Then we have:
\begin{prop}\label{prop:orbifold2}
If $k=2n+1$, $n\geq 1$, let $(i,j)=(i,\frac{i}{2}), \ i=2,4,6,\cdots,2n,$ and $(i,j)=(2n+1,0)$.
 If $k=2n$, $n\geq 2$,  let $(i,j)=(i,\frac{i}{2}),\
 i=2,4,6,\cdots,2n,$ $(i,j)=(n,0) \ \mbox{and} \ (i,j)=(2n,0).$ We have
 $$M^{i,j}=(M^{i,j})^{0}\bigoplus (M^{i,j})^{1},$$
 where $(M^{i,j})^{0}$ is an irreducible module of $K_{0}^{\sigma}$ generated by $v^{i,j}$. And if $i\neq k$, $j=\frac{i}{2}$, $(M^{i,j})^{1}$ is an irreducible module of $K_{0}^{\sigma}$ generated by $W^{3}(1).v^{i,j}$ with weight $\lambda_{i,j}+1$. If
$(i,j)=(k,\frac{k}{2})$, $(M^{k,\frac{k}{2}})^{1}$ is an irreducible module of $K_{0}^{\sigma}$ generated by $W^{3}(0).v^{k,\frac{k}{2}}$ with weight $\lambda_{k,\frac{k}{2}}+2$. If $(i,j)=(\frac{k}{2},0)$, $(M^{\frac{k}{2},0})^{1}$ is an irreducible module of $K_{0}^{\sigma}$ generated by $W^{3}(0).v^{\frac{k}{2},0}$ with weight $\lambda_{\frac{k}{2},0}+2$. If $i=k$, $j=0$, $(M^{k,0})^{1}$ is an irreducible module of $K_{0}^{\sigma}$ generated by $W^{3}$ with weight 3.
\end{prop}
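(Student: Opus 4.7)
The plan is to apply the decomposition result of Dong--Mason \cite{DM1}: if $M$ is an irreducible $K_0$-module which is $\sigma$-stable, then a suitable order-two extension of $\sigma$ to an endomorphism of $M$ splits $M = M^0 \oplus M^1$ into its $\pm 1$-eigenspaces, each of which is an irreducible $K_0^\sigma$-module. The modules $M^{i,j}$ appearing in the proposition are exactly the $\sigma$-stable irreducible $K_0$-modules, as identified in the proof of Proposition \ref{prop:twisted1}, so one immediately obtains $M^{i,j} = (M^{i,j})^0 \oplus (M^{i,j})^1$ as a sum of two inequivalent irreducible $K_0^\sigma$-modules.

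Next I would pin down the action of $\sigma$ on $M^{i,j}$. Since the top space $\C v^{i,j}$ is one-dimensional and $\sigma$-invariant, $\sigma$ acts on it by $\pm 1$; normalize so that $\sigma(v^{i,j}) = v^{i,j}$, placing $v^{i,j}$ in $(M^{i,j})^0$. Then $(M^{i,j})^0$ is the irreducible $K_0^\sigma$-submodule generated by $v^{i,j}$. For $(M^{i,j})^1$, since $\sigma(W^3) = -W^3$, each mode $W^3(n)$ swaps the two $\sigma$-eigenspaces; thus $W^3(n).v^{i,j} \in (M^{i,j})^1$, with weight $\lambda_{i,j} + 2 - n$. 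Because the top of $M^{i,j}$ is the one-dimensional space $\C v^{i,j} \subset (M^{i,j})^0$, the intersection $(M^{i,j})^1 \cap \C v^{i,j} = 0$, so $W^3(n).v^{i,j} = 0$ for $n \ge 2$. The lowest weight vector of $(M^{i,j})^1$ is therefore $W^3(n).v^{i,j}$ at the largest $n \le 1$ for which it is nonzero.

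The core of the argument is the case-by-case verification of this largest $n$. The computations mirror those in Lemma \ref{lem:weight.}, now in the untwisted setting: one expands $W^3(n).v^{i,j}$ using the explicit formula for $W^3$, the commutator $[a(m), b(n)] = [a,b](m+n) + m\la a, b\ra\delta_{m+n,0} k$ for $a, b \in sl_2$, and the action of $sl_2$ on $v^{i,j}$ from Lemma \ref{lem:lowest} (crucially, $h(0)v^{i, i/2} = 0$). For $i \ne k$ with $j = i/2$, one verifies $W^3(1).v^{i,j} \ne 0$, giving lowest weight $\lambda_{i,j} + 1$. For $(i,j) = (k, k/2)$ and $(i,j) = (k/2, 0)$, the identity $h(0)v^{i,j} = 0$ together with additional relations in $M^{i,j}(1)$ forces $W^3(1).v^{i,j} = 0$, while a residual surviving term yields $W^3(0).v^{i,j} \ne 0$, giving lowest weight $\lambda_{i,j} + 2$. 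Finally, for $(i,j) = (k, 0)$ the isomorphism $M^{k,0} \cong M^{0,0} = K_0$ from \cite[Theorem 4.4]{DLY2} identifies $v^{k, 0}$ with $\1$, so $W^3(n).v^{k,0} = 0$ also for $n = 0, 1$ (by the creation axiom), while $W^3(-1).v^{k,0}$ corresponds to $W^3$ itself, the lowest weight vector of $(M^{k,0})^1 \cong K_0^{-}$, with weight $3$.

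The main obstacle is verifying the non-vanishing claims at the special pairs $(k, k/2)$ and $(k/2, 0)$. The relevant coefficients involve subtle cancellations between the $h(-2)h(-1)\1$, $h(-1)^3\1$ and $e(-2)f(-1)\1 - e(-1)f(-2)\1$ summands of $W^3$, and one must track the surviving term carefully in the regime where the top level $\C v^{i,j}$ sits at $sl_2$-weight zero so that several contributions vanish simultaneously. Concretely, I would parallel the structure of Lemma \ref{lem:weight.} but with untwisted modes, and certify non-vanishing by extracting the coefficient of a single convenient basis vector of $M^{i,j}(1)$ or $M^{i,j}(2)$, such as $h(-1)v^{i,j}$, $e(-1)v^{i, j-1}$ or $f(-1)v^{i, j+1}$. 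This reduces the check to a small number of polynomial identities in $k$ and $i$ that can be verified by direct computation.
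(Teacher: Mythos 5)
Your proposal follows essentially the same route as the paper's proof: the Dong--Mason eigenspace decomposition of each $\sigma$-stable $M^{i,j}$, with $v^{i,j}$ generating $(M^{i,j})^{0}$ and a single mode $W^{3}(n).v^{i,j}$ (for the largest admissible $n=1,0,-1$ according to the case) generating $(M^{i,j})^{1}$, the substance being exactly the case-by-case expansion of $W^{3}(n).v^{i,j}$ and the nonvanishing checks via coefficients of $h(-1)v^{i,j}$, $e(-1)v^{i,j+1}$, $f(-1)v^{i,j-1}$ that you outline but defer. One small correction: for $(i,j)=(\tfrac{k}{2},0)$ the vanishing of $W^{3}(1).v^{\frac{k}{2},0}$ is not a consequence of $h(0)v^{i,j}=0$ (indeed $h(0)v^{\frac{k}{2},0}=\tfrac{k}{2}\,v^{\frac{k}{2},0}\neq 0$); it comes from the simultaneous vanishing at $(i,j)=(\tfrac{k}{2},0)$ of the coefficients $-3k(i-2j)+6(i-2j)^{2}-6kj(i-j+1)$ and $3k(j+1)\bigl(k-2(i-2j)\bigr)$ in the explicit expansion of $W^{3}(1).v^{i,j}$, together with the absence of the $f(-1)v^{i,j-1}$ term when $j=0$.
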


\begin{proof}
 If $k=2n+1$, $n\geq 1$, let $(i,j)=(i,\frac{i}{2}), \ i=2,4,6,\cdots,2n,$ and $(i,j)=(2n+1,0)$.
 If $k=2n$, $n\geq 2$,  let $(i,j)=(i,\frac{i}{2}),\
 i=2,4,6,\cdots,2n,$ $(i,j)=(n,0) \ \mbox{and} \ (i,j)=(2n,0).$ From Proposition \ref{prop:twisted1} and \cite{DM1}, we have
 $$M^{i,j}=(M^{i,j})^{0}\bigoplus (M^{i,j})^{1},$$ and $(M^{i,j})^{0}$ is an irreducible module of $K_{0}^{\sigma}$ generated by $v^{i,j}$. Since
\begin{equation}
\begin{split}
W^{3}(1).v^{i,j} &=\Big(-3k(i-2j)+6(i-2j)^{2}-6kj(i-j+1)\Big)h(-1)v^{i,j}\\
& +3k(j+1)\Big(k-2(i-2j)\Big)e(-1)v^{i,j+1}-3k(i-j+1)\Big(2(i-2j)+k\Big)f(-1)v^{i,j-1},
 \label{eq:3.35.}
\end{split}
\end{equation}
taking $j=\frac{i}{2}$, (\ref{eq:3.35.}) becomes
 \begin{equation}
\begin{split}
W^{3}(1).v^{i,j}=-3k(\frac{i}{2}+1)(ih(-1)v^{i,j}-ke(-1)v^{i,j+1}+kf(-1)v^{i,j-1},
 \label{eq:3.36.}
\end{split}
\end{equation}
and \begin{eqnarray}
e(1).W^{3}(1).v^{i,\frac{i}{2}}=-3k(\frac{i}{2}+1)(-i^{2}-2i+2k+k^{2}))v^{i,j-1},\label{eq:3.36.}
\end{eqnarray}
\begin{eqnarray}
f(1).W^{3}(1).v^{i,\frac{i}{2}}=-3k(\frac{i}{2}+1)(i^{2}+2i-2k-k^{2}))v^{i,j+1},\label{eq:3.37.}
\end{eqnarray}
\begin{eqnarray}
h(1).W^{3}(1).v^{i,\frac{i}{2}}=0.\label{eq:3.38.}
\end{eqnarray}
This implies that if $(i,j)=(k,\frac{k}{2})$, $W^{3}(1).v^{k,\frac{k}{2}}=0$, and $i\neq k$, $j=\frac{i}{2}$, $W^{3}(1).v^{i,j}\neq 0$. Thus if $i\neq k$, $j=\frac{i}{2}$, $(M^{i,j})^{1}$ is an irreducible module of $K_{0}^{\sigma}$ generated by $W^{3}(1).v^{i,j}$ with weight $\lambda_{i,j}+1$.
Take $(i,j)=(\frac{k}{2},0)$ in (\ref{eq:3.35.}), then $W^{3}(1).v^{\frac{k}{2},0}=0$.
Now we consider $W^{3}(0).v^{i,j}$ for the case $(i,j)=(k,\frac{k}{2})$ and $(i,j)=(\frac{k}{2},0)$.
Notice that
\begin{equation}
\begin{split}
W^{3}(0).v^{i,j} &=6(i-2j)h(-1)^{2}v^{i,j}+6\Big((i-2j)^{2}-kj(i-j+1)\Big)h(-2)v^{i,j}\\
& -6k(j+1)h(-1)e(-1)v^{i,j+1}-6k(i-j+1)h(-1)f(-1)v^{i,j-1}\\
& -6k(i-2j)e(-1)f(-1)v^{i,j}-6k(j+1)(i-2j-k)e(-2)v^{i,j+1}\\
&-6k(i-j+1)(i-2j+k)f(-2)v^{i,j-1}.
 \label{eq:3.39.}
\end{split}
\end{equation}
Taking $(i,j)=(k,\frac{k}{2})$ in (\ref{eq:3.39.}), we have
\begin{equation}
\begin{split}
W^{3}(0).v^{k,\frac{k}{2}} &=-6k(\frac{k}{2}+1)\Big(\frac{k}{2}h(-2)v^{k,\frac{k}{2}}+h(-1)e(-1)v^{k,\frac{k}{2}+1}\\
& +h(-1)f(-1)v^{k,\frac{k}{2}-1}-ke(-2)v^{k,\frac{k}{2}+1}+kf(-2)v^{k,\frac{k}{2}-1}\Big)\neq 0.
 \label{eq:3.40.}
\end{split}
\end{equation}
Taking $(i,j)=(\frac{k}{2},0)$ in (\ref{eq:3.39.}), we have
\begin{equation}
\begin{split}
W^{3}(0).v^{\frac{k}{2},0} &=3k\Big(h(-1)^{2}v^{\frac{k}{2},0}+\frac{1}{2}kh(-2)v^{\frac{k}{2},0}-2h(-1)e(-1)v^{\frac{k}{2},1}\\
& -ke(-1)f(-1)v^{\frac{k}{2},0}+ke(-2)v^{\frac{k}{2},1}\Big)\neq 0.
 \label{eq:3.41.}
\end{split}
\end{equation}
This shows that if
$(i,j)=(k,\frac{k}{2})$, $(M^{k,\frac{k}{2}})^{1}$ is an irreducible module of $K_{0}^{\sigma}$ generated by $W^{3}(0).v^{k,\frac{k}{2}}$ with weight $\lambda_{k,\frac{k}{2}}+2$, and if $(i,j)=(\frac{k}{2},0)$, $(M^{\frac{k}{2},0})^{1}$ is an irreducible module of $K_{0}^{\sigma}$ generated by $W^{3}(0).v^{\frac{k}{2},0}$ with weight $\lambda_{\frac{k}{2},0}+2$.
If $(i,j)=(k,0)$, we know that $M^{k,0}\cong M^{0,0}=K_0$, thus $(M^{k,0})^{1}$ is an irreducible module of $K_{0}^{\sigma}$ generated by $W^{3}(-1)\1=W^{3}$ with weight 3.

\end{proof}

From \cite{DM1} and the proof of Proposition \ref{prop:twisted1}, we know that if $k=2n+1$, $n\geq 1$, there are totally $\frac{k^{2}-1}{4}$ inequivalent irreducible modules of $K_{0}^{\sigma}$ coming from irreducible modules of $K_{0}$, which are
 $M^{i,0}$ for $1\leq i\leq \frac{k-1}{2},$
  and $M^{i,j}$ for $3\leq i\leq k,$ where if $i=2m$,   $1\leq j\leq m-1$, and if $i=2m+1$,  $1\leq j\leq m$.
If $k=2n$, $n\geq 2$, there are totally $\frac{k^{2}-4}{4}$ inequivalent irreducible modules of $K_{0}^{\sigma}$ coming from irreducible modules of $K_{0}$, which are $M^{i,0}$ for $1\leq i\leq \frac{k-2}{2},$
 and
 $M^{i,j}$ for $3\leq i\leq k,$  where if $i=2m$, $1\leq j\leq m-1$, and if $i=2m+1$,  $1\leq j\leq m$.

\vskip 0.3cm
 From the above discussion together with  Proposition \ref{prop:orbifold1} and  Proposition \ref{prop:orbifold2}, we obtain our final result.

 \begin{thm}\label{thm:orbifold3}
If $k=2n+1$, $n\geq 1$, there are $\frac{(k+1)(k+7)}{4}$ inequivalent irreducible modules of $K_{0}^{\sigma}$.
 If $k=2n$, $n\geq 2$, there are $\frac{(k^{2}+8k+28)}{4}$ inequivalent irreducible modules of $K_{0}^{\sigma}$. Furthermore, the lowest weights of these irreducible modules are given in Proposition \ref{prop:twisted1}, Proposition \ref{prop:orbifold1} and  Proposition \ref{prop:orbifold2}.
\end{thm}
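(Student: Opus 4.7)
The plan is a counting argument: by the remark preceding Proposition \ref{prop:twisted1}, combined with \cite{DRX}, every irreducible $K_0^\sigma$-module arises as a direct summand of either an irreducible $K_0$-module or an irreducible $\sigma$-twisted $K_0$-module. The proof therefore reduces to assembling three contributions---$\sigma$-orbits of non-$\sigma$-stable irreducible $K_0$-modules (one irreducible $K_0^\sigma$-module each), $\sigma$-stable irreducible $K_0$-modules (two each, by Proposition \ref{prop:orbifold2}), and irreducible $\sigma$-twisted $K_0$-modules (two each, by Proposition \ref{prop:orbifold1})---and verifying that the resulting $K_0^\sigma$-modules are pairwise inequivalent.

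First I would tabulate the untwisted contribution. By Theorem 8.2 of \cite{ALY1} there are $\tfrac{k(k+1)}{2}$ irreducible $K_0$-modules, and the analysis in Proposition \ref{prop:twisted1} identifies the $\sigma$-stable ones, of which there are $\tfrac{k+1}{2}$ when $k=2n+1$ and $\tfrac{k}{2}+2$ when $k=2n$; the remaining modules fall into $\sigma$-orbits of size two, giving $\tfrac{k^2-1}{4}$ and $\tfrac{k^2-4}{4}$ orbits respectively. Weighting by $2$ in the stable case and by $1$ in the unstable case, the untwisted totals are
\[
2\cdot\tfrac{k+1}{2}+\tfrac{k^2-1}{4} = \tfrac{(k+1)(k+3)}{4}
\quad\text{and}\quad
2\bigl(\tfrac{k}{2}+2\bigr)+\tfrac{k^2-4}{4} = \tfrac{k^2+4k+12}{4}.
\]
The twisted contribution is, by Propositions \ref{prop:twisted1} and \ref{prop:orbifold1}, twice the number of $\sigma$-twisted irreducibles, i.e.\ $k+1$ for $k$ odd and $k+4$ for $k$ even. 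Summing the two contributions gives $\tfrac{(k+1)(k+3)}{4} + (k+1) = \tfrac{(k+1)(k+7)}{4}$ in the odd case and $\tfrac{k^2+4k+12}{4} + (k+4) = \tfrac{k^2+8k+28}{4}$ in the even case, matching the claim.

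The step requiring care is checking inequivalence among the modules produced. Within each $\sigma$-stable $M^{i,j}$ (resp.\ each twisted $W(k,i)$ or $\widetilde{W(k,\tfrac{k}{2})}$), the two $K_0^\sigma$-summands are distinguished by their lowest $L(0)$-eigenvalues, which differ by the nonzero shifts tabulated in Propositions \ref{prop:orbifold1} and \ref{prop:orbifold2}. Across families, the lowest weights of twisted irreducible $K_0^\sigma$-modules uniformly carry the $\tfrac{k-1}{16}$ shift coming from $\Delta(h'',z)\omega$ in Lemma \ref{lem:cal}, whereas the untwisted lowest weights $\lambda_{i,j}$ and their integer shifts from Proposition \ref{prop:orbifold2} form a disjoint set of rationals; the $(i,j)$-labels together with the $0/1$ superscript then separate the weights within either family. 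I expect this bookkeeping to be the only real step remaining; the hard analytical work---the construction of $\widetilde{W(k,\tfrac{k}{2})}$ via Lemma \ref{lem:lowest1} and the identification of the second lowest-weight vectors via Lemma \ref{lem:weight.}---has already been packaged in the preceding results, so the proof of Theorem \ref{thm:orbifold3} itself is essentially an orbit-counting exercise.
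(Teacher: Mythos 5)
Your counting is the paper's counting: one irreducible $K_0^{\sigma}$-module for each size-two $\sigma$-orbit of irreducible $K_0$-modules ($\frac{k^2-1}{4}$, resp.\ $\frac{k^2-4}{4}$, orbits), two for each $\sigma$-stable irreducible $K_0$-module via Proposition \ref{prop:orbifold2}, and two for each irreducible $\sigma$-twisted module via Proposition \ref{prop:orbifold1}; your arithmetic reproduces $\frac{(k+1)(k+7)}{4}$ and $\frac{k^2+8k+28}{4}$ exactly as in the discussion preceding Theorem \ref{thm:orbifold3}. So the structure of the argument agrees with the paper's.

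The one step I would not accept as written is your inequivalence check. You propose to separate the resulting $K_0^{\sigma}$-modules by their lowest $L(0)$-eigenvalues, asserting that the twisted weights and the untwisted weights $\lambda_{i,j}$ (plus their integer shifts) form disjoint sets and that the $(i,j)$-labels separate weights within each family. Neither assertion is verified, and the second is false in general: one always has $\lambda_{i,0}=\lambda_{k-i,0}$, and genuinely inequivalent modules can share a lowest weight --- for instance at $k=16$ one finds $\lambda_{8,0}=\lambda_{2,1}=\frac{1}{9}$ with $M^{8,0}$ and $M^{2,1}$ both $\sigma$-stable, so their degree-zero summands are distinct irreducible $K_0^{\sigma}$-modules with identical lowest weight. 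Lowest weights therefore cannot carry the inequivalence argument. Fortunately this step is redundant: the quantum Galois theory of \cite{DM1} together with \cite{DRX}, which you already invoke to reduce the classification to modules and $\sigma$-twisted modules of $K_0$, also guarantees that the two summands of a $\sigma$-stable or $\sigma$-twisted irreducible are inequivalent and that summands coming from irreducible (twisted or untwisted) $K_0$-modules lying in distinct $\sigma$-orbits are inequivalent. Replacing the weight bookkeeping by that citation makes your proof complete and essentially identical to the paper's.
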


\begin{rmk}  \cite{JLa} has given the level-rank duality for coset orthogonal affine vertex operator algebras. In particular, let $so_{2k}$ be the orthogonal Lie algebra of rank $k$, then
$$C_{K(so_{2k}, 2)}(K(sl_{k},2))\cong K(sl_2,k)^{\sigma},$$
where $C_{K(so_{2k}, 2)}(K(sl_{k},2))$ is the commutant of $K(sl_{k},2)$ in $K(so_{2k}, 2)$.
With the above result, irreducible modules of  $C_{K(so_{2k}, 2)}(K(sl_{k},2))$ are classified and constructed.
\end{rmk}

\end{document}